\documentclass[11pt,reqno]{amsart}
%
\usepackage{amssymb}
\usepackage{amsfonts}
\usepackage{amsthm}
\usepackage{amscd}
\textheight=20.5truecm
\numberwithin{equation}{section}
\overfullrule=0pt
\theoremstyle{plain}
\newtheorem{theorem}{Theorem}[section]
\newtheorem{proposition}[theorem]{Proposition}
\newtheorem{corollary}[theorem]{Corollary}

\theoremstyle{definition}
\newtheorem{definition}[theorem]{Definition}
\newtheorem{remark}[theorem]{Remark}
\newtheorem*{example}{Example}
%
\textwidth15.0cm
\hoffset-1.2cm
\headsep20pt
%
\newcommand{\refE}[1]{(\ref{eq:#1})}

\newcommand{\refS}[1]{Section~\ref{sec:#1}}
\newcommand{\refT}[1]{Theorem~\ref{T:#1}}

\newcommand{\refP}[1]{Proposition~\ref{P:#1}}
\newcommand{\refD}[1]{Definition~\ref{D:#1}}


\newcommand{\C}{\ensuremath{\mathbb{C}}}

\newcommand{\Z}{\ensuremath{\mathbb{Z}}}
\newcommand{\K}{\ensuremath{\mathcal{K}}}

\newcommand{\J}{\ensuremath{\mathbb{J}}}
\newcommand{\Jl}{\ensuremath{\mathbb{J}_\lambda}}
\newcommand{\Pro}{\ensuremath{\mathbb{P}}}

\newcommand{\cins}{\frac 1{2\pi\mathrm{i}}\int_{C_S}}

\newcommand{\cintl}[1]{\frac 1{24\pi\mathrm{i}}\int_{#1 }}

\newcommand{\A}{\mathcal{A}}

\newcommand{\La}{\mathcal{L}}
\newcommand{\Sp}{\mathcal{F}^{-1/2}}
\newcommand{\Sa}{\mathcal{S}}

\newcommand{\ord}{\operatorname{ord}}
\newcommand{\res}{\operatorname{res}}
\newcommand{\fpz}{\frac {d }{dz}}
\newcommand{\pfz}[1]{\frac {d#1}{dz}}

\newcommand{\Ho}{\mathrm{H}}

\newcommand{\al}{\ensuremath{\alpha}}
\newcommand{\be}{\ensuremath{\beta}}



\newcommand{\Fl}[1][\lambda]{\mathcal{F}^{#1}}

\newcommand{\Do}{\mathcal{D}^1}

\newcommand{\kndual}[2]{\langle #1,#2\rangle}
\newcommand{\ldot}{\,.\,}

\newcommand{\la}{\lambda}

\newcommand{\sbul}{{\boldsymbol{\cdot}}}

\begin{document}
\title[Lie superalgebras of Krichever-Novikov type]
{Lie superalgebras of Krichever-Novikov type and their 
central extensions}

\author[Martin Schlichenmaier]{Martin Schlichenmaier}
\thanks{Partial  support by
the 
Internal Research Project  GEOMQ11,  University of Luxembourg,
is acknowledged.}
\address{%
University of Luxembourg\\
Mathematics Research Unit, FSTC\\
Campus Kirchberg\\ 6, rue Coudenhove-Kalergi,
L-1359 Luxembourg-Kirchberg\\ Luxembourg
}
\email{martin.schlichenmaier@uni.lu}
\begin{abstract}
Classically
important examples of
Lie superalgebras 
have been constructed starting from the Witt and Virasoro 
algebra. In this article 
we consider Lie superalgebras of 
 Krichever-Novikov type.
These algebras are multi-point and higher genus equivalents. 
The grading in the classical case is replaced by an almost-grading.
The almost-grading is
determined by a splitting of the set of points were poles are allowed into
two disjoint subsets. 
With respect to  a fixed splitting, 
or equivalently with respect to an 
 almost-grading, it is shown
that there is up to rescaling and equivalence a unique 
non-trivial central extension. It is given explicitly.
Furthermore, a complete classification of bounded cocycles
(with respect to the almost-grading) is given.
\end{abstract}
\subjclass{Primary: 17B56; Secondary: 17B68, 17B65, 17B66, 30F30, 
81R10, 81T40}
\keywords{Superalgebras, Lie algebra cohomology; 
central extensions; conformal field theory, Jordan superalgebras}
\date{2.1.2013, minor revisions 20.3.2013}
\maketitle

\vskip 1.0cm
\section{Introduction}\label{S:intro}
Krichever--Novikov (KN) type algebras give important examples of infinite
dimensional algebras. They are defined 
via meromorphic objects on compact Riemann surfaces $\Sigma$ of
arbitrary genus with controlled polar behaviour. More precisely,
poles are only allowed at a fixed finite set of points denoted
by $A$. The classical examples are the algebras defined by objects
on the Riemann sphere (genus zero) with possible poles only at
$\{0,\infty\}$. This yields e.g. the well-known Witt algebra, current
algebras,
and their central extensions the Virasoro, and the affine Kac-Moody
algebras.
For higher genus, but still only for two points were poles are
allowed, they were generalised by Krichever and Novikov 
\cite{KNFa}, \cite{KNFb}, \cite{KNFc} in 1986.
In 1990 the author 
\cite{SLa}, \cite{SLb}, \cite{SLc}, \cite{SDiss} extended the approach
further to the general multi-point case. 
This extension was not a straight-forward generalization.
The crucial point was to introduce a replacement of the
graded algebra structure present in the ``classical'' case.
Krichever and Novikov found that an almost-grading, see
\refD{almgrad}, will be enough to allow constructions in
representation theory, like triangular decomposition, highest
weight modules, Verma modules and so on.
In \cite{SLc}, \cite{SDiss} it was realized that a splitting
of $A$ two disjoint non-empty subsets $A=I\cup O$ is crucial for
introducing an almost-grading
and the corresponding almost-grading was given.
In the classical situation there is only one such splitting
(up to inversion) hence there is only one almost-grading, which is
indeed a grading.
Similar to  the classical situation, a Krichever-Novikov algebra
should
always be considered as an algebra of meromorphic objects
with an almost-grading coming from such a splitting.

I like to point out that already in the genus zero case 
(i.e. the Riemann sphere case) with more
than two points where  poles are allowed the algebras will be only 
almost-graded. In fact, quite a number of interesting
new phenomena will show up already there, see
\cite{SchlDeg}, \cite{FiaSchl1}, \cite{FiaSchlaff}.

\medskip
In the context of conformal field theory and string theory
superextensions of the classical algebras appeared, see e.g.
\cite{GSW1}. Very important examples are the Neveu-Schwarz and the
Ramond type superalgebras.
Quite soon some physicists also studied superanalogs of the
algebra of Krichever-Novikov type, but still only with two points were
poles are allowed, e.g.
\cite{BMRR}, \cite{BLMR}, \cite{BMTW}, \cite{Bryant}, \cite{Zac}.
The multi-point case was also developed by the author. It 
has not been published yet, but see \cite{Schlknbook}.

Quite recently, these superalgebras of Krichever--Novikov type found
again
interest in the context of Jordan superalgebras and Lie antialgebras
(see Ovsienko \cite{Ovs1} for their definitions and Lecomte and
Ovsienko\cite{LecOvs} for further properties).
Starting from  Krichever--Novikov type superalgebras  interesting
explicite infinite dimensional examples of Jordan superalgebras 
and antialgebras
can be constructed.
In this respect, see the work of Leidwanger and Morier-Genoud 
\cite{LeiMor}, \cite{LeiMor1}, and Kreusch \cite{Kreusch}.

\medskip

The goal of this article is to recall the general definition of 
KN algebras for the multi-point situation and for arbitrary 
genus. The classical situation will be a special case.
In particular, the construction of the Lie superalgebra is
recalled. Its almost-graded structure, induced by a fixed
splitting $A=I\cup O$ is given.
Also the Jordan superalgebra of KN type 
(with its almost-grading) fits perfectly in this picture.
See Remark \ref{jordan}.

One of the main results of the paper is the proof that there
is, up to rescaling the central element and equivalence, only 
one non-trivial almost-graded central extension  of the Lie superalgebra
of KN type with even central element.
We stress the fact, that this does not mean that there is
essentially only one central extension.
In fact, a different splitting of $A$ will yield a different
almost-grading and hence an essentially different 
central extension. Moreover, at least for higher genus, 
there are central extensions which are not related to any
almost-grading.
In the classical situation we reprove uniqueness of the
non-trivial central extension.

We will give a geometric description for the defining cocycle,
see \refE{sacoc}.
For the two-point case the form of cocycle was given 
by Bryant in \cite{Bryant},
correcting some ommission in \cite{BMRR}.

A cocycle is bounded from above if 
its value is zero
if the sum of the
degrees of the (homogeneous) arguments are higher than a certain
bound. Krichever and Novikov introduced the term ``local'' cocycle,
for a cocycle which is bounded from above and from below.
Local cocycles are exactly those cocycles which define
central extensions which allow that the almost-grading can 
be extended to them.
In the process of proving the uniqueness of local cocycle classes
(\refT{main})
we give a complete classification of bounded (from above) cocycles.
To prove this we show
the fact that  a bounded cocycle 
of the Lie superalgebra  is already fixed by its restriction 
to the vector field subalgebra
(\refP{vanish}). For the vector field algebra
the bounded cocycle were classified by the author \cite{Scocyc}.

Up to this point we assumed that the central element was an even
element. In an additional section we consider the case that
the central element is odd. We show that all bounded (from
above) cocycles for
odd central elements are cohomologically trivial. This means that
the corresponding central extension of the Lie superalgebra will
split.

We close with some remarks on special examples.

\section{The algebras}\label{sec:algebra}
\subsection{The geometric set-up}
For the whole article let $\Sigma$ be a compact Riemann surface 
without any restriction for the genus $g=g(\Sigma)$.
Furthermore, let $A$ be a finite subset of $\Sigma$.
Later we will need a splitting of $A$ into two non-empty disjoint
subsets $I$ and $O$, i.e. $A=I\cup O$. Set $N:=\#A$,
$K:=\#I$, $M:=\#O$, with $N=K+M$. 
More precisely, let
\begin{equation}
I=(P_1,\ldots,P_K),\quad\text{and}\quad
O=(Q_1,\ldots,Q_{M})
\end{equation}
be disjoint  ordered tuples of  distinct points (``marked points'',
``punctures'') on the Riemann surface.
In particular, we assume $P_i\ne Q_j$ for every
pair $(i,j)$. The points in $I$ are
called the {\it in-points}, the points in $O$ the {\it out-points}.
Sometimes we consider $I$ and $O$ simply as sets.

In the article we sometimes refer to the classical situation. By this
we understand $\Sigma=S^2$, the Riemann sphere, or equivalently the
projective line over $\C$, $I=\{0\}$ and $O=\{\infty\}$ with respect
to the quasi-global coordinate $z$.

Our objects, algebras, structures, ...  will be
meromorphic objects 
defined on $\Sigma$ which are holomorphic outside  the points in
$A$.
To introduce them 
let $\K=\K_\Sigma$ be the canonical line bundle of $\Sigma$,
resp. the locally free canonically sheaf.
The local sections of the bundle are
the local holomorphic differentials.
If $P\in\Sigma$ is a point and $z$ a local holomorphic coordinate
at $P$ then a local holomorphic differential 
can be written as $f(z)dz$ with a local holomorphic function 
$f$ defined in a neighbourhood of $P$.
A global holomorphic section can be described locally for a covering
by coordinate charts $(U_i,z_i)_{i\in J}$ by a system of
local holomorphic functions $(f_i)_{i\in J}$, which are related by 
the transformation rule induced by the
coordinate change map $z_j=z_j(z_i)$ and the condition
$f_idz_i=f_jdz_j$
\begin{equation}\label{eq:ktrans}
f_j=f_i\cdot \left(\frac {dz_j}{dz_i}\right)^{-1}.
\end{equation}
With 
respect to a coordinate covering
a meromorphic section of $\K$ is given as 
a collection of local meromorphic functions $(h_i)_{i\in J}$ 
for which the transformation
law \refE{ktrans} is true.

In the following $\la$ is either an integer or a half-integer.
If $\la$ is an integer then 
\newline
(1) $\K^{\la}=\K^{\otimes \la}$ 
for $\la>0$, 
\newline
(2)  $\K^{0}=\mathcal{O}$,
 the trivial line bundle, and 
\newline
(3) $\K^{\la}=(\K^*)^{\otimes (-\la)}$ 
for $\la<0$.
\newline
Here as usual $\K^*$ denotes the  dual line bundle
to the canonical line bundle.
The dual line bundle is the holomorphic tangent line bundle, whose
local sections are the holomorphic tangent vector fields
$f(z)(d/d z)$.
If $\la$ is a half-integer, then we first have to fix a ``square
root''
of the canonical line bundle, sometimes called a
\emph{theta-characteristics}.
This means we fix a line bundle $L$ for
which $L^{\otimes 2}=K$.

After such a choice of $L$ is done we set 
$\K^{\la}=\K^{\la}_L=L^{\otimes 2\la}$.
In most cases we will drop  mentioning $L$, but we have to
keep the choice in mind.
Also the structure of the 
algebras we are about to define will depend on the
choice. But the main  properties will remain the same.

\begin{remark}
A Riemann surface of genus $g$ has exactly 
$2^{2g}$ non-isomorphic square roots of $\K$. 
For $g=0$ we have $\K=\mathcal{O}(-2)$ and 
$L=\mathcal{O}(-1)$, the tautological bundle which 
 is the unique square root.
Already for $g=1$ we have 4 non-isomorphic ones.
As in this case $\K=\mathcal{O}$ one solution is 
$L_0=\mathcal{O}$. 
But we have also other bundles $L_i$, $i=1,2,3$.
Note that $L_0$ has a non-vanishing global holomorphic
section, whereas  $L_1,L_2$, $L_3$ do not have a global holomorphic
section. 
In general, 
depending on the parity of $\dim\Ho(\Sigma,L)$, one 
distinguishes even and odd theta characteristics $L$. For $g=1$ 
the bundle $\mathcal{O}$ is  odd, the others are even
theta characteristics.
\end{remark}

\bigskip
We set
\begin{multline}\qquad
\Fl:=\Fl(A):=\{f \text{ is a global meromorphic section of } K^\la
\mid
\\
\text{such that} \ 
f \text{ is  holomorphic over } \Sigma\setminus A\}.\qquad\qquad
\end{multline}
As in this work the set of $A$ is fixed we do not add it to 
the notation.
Obviously, $\Fl$ is an infinite dimensional  $\C$-vector space.
Recall that in the case of half-integer $\la$ everything depends on
the theta characteristic $L$.

We call the elements of the space $\Fl$  \emph{meromorphic forms
of weight $\la$} 
(with respect to the 
theta characteristic $L$).
In local coordinates $z_i$ we can write such a form as 
$f_idz_i^\la$, with $f_i$ being a local holomorphic, resp. meromorphic function.

\subsection{Associative Multiplication}
The natural map of the 
locally free sheaves of rang one  
\begin{equation}
\K^{\lambda}\times \K^\nu\to
 \K^{\lambda}\otimes \K^\nu\cong 
\K^{\lambda+\nu},
\quad
(s,t)\mapsto s\otimes t,
\end{equation}
defines a bilinear map 
\begin{equation}\label{eq:afl}
\sbul:\Fl\times \Fl[\nu]\to \Fl[\la+\nu].
\end{equation}
With respect to local trivialisations this corresponds to the
multiplication of the local representing meromorphic functions
\begin{equation}
(s\, dz^{\la},t\, dz^{\nu} )
\mapsto s\, dz^{\la}\;\sbul\;t\, dz^{\nu}= s\cdot t\;
dz^{\la+\nu}.
\end{equation}
If there is no danger of confusion then we will mostly
use the same symbol for the section and for the local
representing function.

We set 
\begin{equation}
\mathcal{F}:=\bigoplus_{\la\in\frac 12\Z}\Fl.
\end{equation}
The following is obvious
\begin{proposition}\label{P:aass}
The vector space 
$\mathcal{F}$ is an  associative and commutative
graded (over $\frac 12\Z$)
algebra. Moreover, $\Fl[0]$ is a subalgebra.
\end{proposition}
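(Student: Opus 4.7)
The plan is to verify each assertion directly from the definition of the bilinear map in \refE{afl} and the tensor product of line bundles, since all the work has really been done by defining the multiplication correctly on local representatives.

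First I would check that the product $\sbul:\Fl\times\Fl[\nu]\to\Fl[\la+\nu]$ is well-defined on the spaces involved, i.e.\ that if $s\in\Fl$ and $t\in\Fl[\nu]$ then $s\sbul t$ is holomorphic on $\Sigma\setminus A$. This is immediate: since $s$ and $t$ are both holomorphic on $\Sigma\setminus A$ and the tensor product of two local holomorphic sections is holomorphic, the product $s\sbul t$ has poles contained in $A$; on $\Sigma\setminus A$ it is holomorphic, so $s\sbul t\in\Fl[\la+\nu]$. (The case of half-integer weights relies on the fixed theta characteristic $L$, but the isomorphism $L^{\otimes 2\la}\otimes L^{\otimes 2\nu}\cong L^{\otimes 2(\la+\nu)}$ takes care of it.)

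Next, I would promote $\sbul$ to a bilinear multiplication on $\Cal{F}$ by extending biadditively across the direct sum decomposition. Associativity and commutativity then reduce to the corresponding properties of the pointwise product of local meromorphic functions under a coordinate chart: writing sections locally as $s\,dz^\la$, $t\,dz^\nu$, $u\,dz^\mu$, one has
\begin{equation*}
(s\,dz^\la\sbul t\,dz^\nu)\sbul u\,dz^\mu = (st)u\,dz^{\la+\nu+\mu}=s(tu)\,dz^{\la+\nu+\mu}=s\,dz^\la\sbul(t\,dz^\nu\sbul u\,dz^\mu),
\end{equation*}
and similarly $st=ts$ gives commutativity. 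These identities are compatible with the transition rule \refE{ktrans} because the cocycle factor $(dz_j/dz_i)^{-\la}$ behaves multiplicatively in $\la$, so the local identities glue to a global identity of sections.

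The grading assertion is then automatic from $\sbul(\Fl,\Fl[\nu])\subseteq\Fl[\la+\nu]$, and the final statement that $\Fl[0]$ is a subalgebra follows by taking $\la=\nu=0$. I do not expect any genuine obstacle here; the only point worth mentioning is the half-integer case, where one must remember that all weights are taken with respect to the same fixed theta characteristic $L$ so that the identifications $L^{\otimes 2\la}\otimes L^{\otimes 2\nu}\cong L^{\otimes 2(\la+\nu)}$ are canonical and the associativity diagram commutes strictly.
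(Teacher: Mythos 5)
Your proof is correct and follows exactly the route the paper has in mind: the paper simply declares the proposition obvious, since the multiplication is defined via the natural tensor-product map $\K^{\la}\otimes\K^{\nu}\cong\K^{\la+\nu}$, and your write-up is precisely the routine local verification (well-definedness of pole behaviour, associativity and commutativity of local function multiplication, compatibility with the transition rule, and the fixed theta characteristic $L$ in the half-integer case) that this ``obvious'' hides. Nothing further is needed.
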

We use also  $\A:=\Fl[0]$.
Of course, it is the  algebra of meromorphic functions on $\Sigma$
which are holomorphic outside of $A$.
The spaces $\Fl$ are modules over $\A$.
\subsection{Lie algebra structure}
Next we define a Lie  algebraic structure on the space $\mathcal{F}$.
The structure is  induced by the map 
\begin{equation}
\Fl\times \Fl[\nu]\to \Fl[\la+\nu+1],
\qquad (s,t)\mapsto [s,t],
\end{equation}
which is defined in local representatives of the sections by
\begin{equation}\label{eq:aliea}
(s\, dz^{\la},t\, dz^{\nu} )
\mapsto [s\, dz^{\la},t\, dz^{\nu}]:= \left((-\la)s\pfz{t}+\nu\, 
t\pfz{s}\right)
dz^{\la+\nu+1},
\end{equation}
and bilinearly extended to $\mathcal{F}$.
\begin{proposition}
(a)
The bilinear map $[.,.]$ defines a Lie algebra structure 
on $\mathcal{F}$.

\noindent
(b) The space  $\mathcal{F}$
with respect to $\sbul$ and 
$[.,.]$ is a Poisson algebra.
\end{proposition}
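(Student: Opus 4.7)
The plan is to verify in turn: (i) the local formula \refE{aliea} defines a global section of $\K^{\la+\nu+1}$, so that $[\cdot,\cdot]$ is a well-defined map $\Fl\times\Fl[\nu]\to\Fl[\la+\nu+1]$; (ii) the bracket is antisymmetric and satisfies the Jacobi identity; and (iii) it is a derivation of the associative product $\sbul$, so that $(\mathcal{F},\sbul,[\cdot,\cdot])$ is a Poisson algebra.

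First I would check coordinate invariance, which is the only step where something geometric really happens. Under a change of local coordinate $w=w(z)$ with $\phi:=dw/dz$, a weight-$\la$ form transforms by $s=\tilde s\,\phi^\la$ according to \refE{ktrans}. A direct computation gives $\partial_z s = \phi^{\la+1}\partial_w\tilde s + \la\,\phi^{\la-1}\phi'\,\tilde s$, and analogously for $t$. Substituting into $-\la s\,\partial_z t + \nu t\,\partial_z s$, the two ``anomalous'' terms carrying the factor $\phi'$ appear with coefficients $-\la\nu$ and $+\nu\la$ and therefore cancel, leaving exactly $\phi^{\la+\nu+1}\bigl(-\la\,\tilde s\,\partial_w\tilde t + \nu\,\tilde t\,\partial_w\tilde s\bigr)$, which is the transformation law \refE{ktrans} for a weight-$(\la+\nu+1)$ form. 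This cancellation is the one place where the precise coefficients $-\la$ and $+\nu$ of \refE{aliea} are forced, and it is the main technical content of the proposition; every subsequent step is purely algebraic.

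With well-definedness in hand, antisymmetry on homogeneous elements is immediate: swapping $(\la,s)\leftrightarrow(\nu,t)$ sends $-\la s t' + \nu t s'$ to $-\nu t s' + \la s t' = -(-\la s t' + \nu t s')$. For the Jacobi identity I would pick homogeneous $a=s\,dz^\la$, $b=t\,dz^\nu$, $c=u\,dz^\mu$ and expand the cyclic sum $[[a,b],c]+[[b,c],a]+[[c,a],b]$, which is a weight-$(\la+\nu+\mu+2)$ form. Locally it becomes a polynomial in $s,t,u$ and their first and second derivatives, whose coefficients are polynomials in $\la,\nu,\mu$. Grouping like monomials $stu''$, $st'u'$, $s't'u$, \ldots, one checks that after cyclic summation each coefficient is an identically vanishing polynomial in $\la,\nu,\mu$. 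This is just bookkeeping, driven entirely by the cyclic symmetry.

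Finally, for the Poisson compatibility, fix homogeneous $a,b,c$ as above and compute both sides of $[a,\, b\sbul c] = [a,b]\sbul c + b\sbul[a,c]$. Using $(tu)' = t'u + tu'$, the left side expands to
\begin{equation*}
\bigl(-\la s t' u \;-\; \la s t u' \;+\; (\nu+\mu)\,tu\,s'\bigr)\,dz^{\la+\nu+\mu+1},
\end{equation*}
while the right side, computed bracket-by-bracket and then multiplied, gives
\begin{equation*}
\bigl(-\la s t' u + \nu tu s'\bigr)\,dz^{\la+\nu+\mu+1} \;+\; \bigl(-\la s t u' + \mu tu s'\bigr)\,dz^{\la+\nu+\mu+1},
\end{equation*}
which matches term by term. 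Bilinear extension of antisymmetry, Jacobi and Leibniz to all of $\mathcal{F}$ then finishes the proof. The only genuine obstacle is the coordinate-invariance computation of step (i); the remaining identities are direct local calculations.
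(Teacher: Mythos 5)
Your proposal is correct, and it follows exactly the route the paper indicates: the paper's proof simply states that the result ``is done by local calculations'' and defers the details to \cite{SchlHab}, \cite{Schlknbook}, while you carry those local calculations out explicitly --- the coordinate-invariance check (where the anomalous $\phi'$-terms with coefficients $-\la\nu$ and $+\nu\la$ cancel), the cyclic-sum verification of Jacobi, and the Leibniz identity all check out, including for half-integer $\la$ once $\phi^{\la}$ is read via the chosen square root $L$.
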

\begin{proof}
This is done by local calculations. For details 
see \cite{SchlHab}, \cite{Schlknbook}.
\end{proof}
\subsection{The vector field algebra and the Lie derivative}

\begin{proposition}
The subspace $\La:=\Fl[-1]$ is a Lie subalgebra, and the 
$\Fl$'s are Lie modules over $\La$.
\end{proposition}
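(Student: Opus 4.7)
The plan is to verify the closure properties of the bracket \refE{aliea} on the relevant weights, and then invoke the already-established Lie algebra structure on the full space $\mathcal{F}$ to obtain the Jacobi identities for free.

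The main observation is that the bracket on $\Fl \times \Fl[\nu]$ lands in $\Fl[\la+\nu+1]$. For $\La = \Fl[-1]$ and $\Fl[-1]$, this degree becomes $-1 + (-1) + 1 = -1$, while for $\La$ and $\Fl$ it becomes $-1 + \nu + 1 = \nu$. Hence $[\La,\La] \subseteq \La$ and $[\La,\Fl] \subseteq \Fl$. Concretely, evaluating \refE{aliea} in a local coordinate with $\la=-1$ and $\nu=-1$ recovers the classical vector field bracket
\begin{equation*}
[s\, dz^{-1},\, t\, dz^{-1}] = \bigl(s\, \pfz{t} - t\, \pfz{s}\bigr)\, dz^{-1},
\end{equation*}
while for general $\nu$ the formula
\begin{equation*}
[s\, dz^{-1},\, t\, dz^{\nu}] = \bigl(s\, \pfz{t} + \nu\, t\, \pfz{s}\bigr)\, dz^{\nu}
\end{equation*}
is precisely the Lie derivative of a $\la$-form along a vector field.

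Once these two closure statements are in hand, the algebraic axioms come at no extra cost: the previous proposition asserts that $(\mathcal{F},[\cdot,\cdot])$ is a Lie algebra, so restricting the bracket to the subspace $\La \subseteq \mathcal{F}$ automatically gives antisymmetry and the Jacobi identity on $\La$. Similarly, the Jacobi identity
\begin{equation*}
[[e_1,e_2],f] = [e_1,[e_2,f]] - [e_2,[e_1,f]], \qquad e_1,e_2 \in \La,\ f \in \Fl,
\end{equation*}
is just a special case of the Jacobi identity on $\mathcal{F}$, and together with bilinearity it is exactly the condition that $\Fl$ be a Lie module over $\La$.

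The only genuine verification needed is the transformation behaviour: one has to check that the two local expressions above are independent of the coordinate chart, so that they indeed define global sections of $\can^{-1}$ and $\can^{\nu}$ respectively. This is a routine computation using the chain rule applied to \refE{ktrans} with $\la=-1$ and with general $\la=\nu$, and it is precisely the content of the local calculation already invoked in the previous proof. I expect this chart-change computation to be the only mildly tedious step; the rest is a direct reading-off of the weight in \refE{aliea}.
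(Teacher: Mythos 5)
Your proof is correct and follows essentially the same route the paper takes: the paper states this proposition without a separate proof precisely because, as you observe, the weight count $\la+\nu+1$ in \refE{aliea} gives $[\La,\La]\subseteq\La$ and $[\La,\Fl]\subseteq\Fl$, the local formulas recover the vector field bracket \refE{aLbrack} and the Lie derivative \refE{alied}, and antisymmetry plus Jacobi restrict from the Lie algebra structure already established on $\mathcal{F}$ (whose coordinate-independence is the deferred local calculation). Your identification of the chart-change verification as the only substantive step matches the paper's reliance on the same local computations cited for the preceding proposition.
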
 
As forms of weight $-1$ are vector fields, $\La$ could also be
defined as the Lie algebra of those meromorphic vector fields 
on the Riemann surface $\Sigma$ which are holomorphic outside
of $A$.
The product \refE{aliea} gives the usual
Lie bracket of vector fields and the Lie derivative for their actions on 
forms. We get 
(again naming the  local functions
with the same symbol as the section) 
\begin{equation}\label{eq:aLbrack}
[e,f]_|(z)=[e(z)\fpz, f(z)\fpz]=
\left( e(z)\pfz f(z)- f(z)\pfz e(z)\right)\fpz \ ,
\end{equation}
\begin{equation}\label{eq:alied}
\nabla_e(f)_|(z)=L_e(g)_|=
e\ldot g_{|}=
\left( e(z)\pfz f(z)+\lambda f(z)\pfz e(z)\right)\fpz\ .
\end{equation}

\subsection{The algebra of differential operators}

In $\mathcal{F}$,
considered as Lie algebra,
$\A=\Fl[0]$ is an abelian Lie subalgebra and the vector space sum
$\Fl[0]\oplus\Fl[-1]=\A\oplus \La$  
is also a Lie subalgebra of  $\mathcal{F}$.
In an equivalent way it can also be constructed as 
semi-direct sum of $\A$ considered as abelian Lie algebra
and $\La$ operating on $\A$ by taking the derivative.
This Lie algebra is called the 
\emph{Lie algebra of differential operators of degree
$\le 1$}
and is denoted by
$\Do$.
In more direct terms $\Do=\A\oplus \La$ as vector space direct sum
and endowed with the Lie product
\begin{equation}
[(g,e),(h,f)]=(e\ldot h-f\ldot g\,,\,[e,f]).
\end{equation} 
The $\Fl$ will be Lie-modules over $\Do$.


\subsection{Superalgebra of half forms}
\label{sec:super}

Next we consider the associative product
\begin{equation}
\sbul\ \Fl[-1/2]\times \Fl[-1/2]\to \Fl[-1]=\La.
\end{equation}

Introduce the vector space and the product
\begin{equation}\label{eq:saprod}
\Sa:=\La\oplus\Fl[-1/2],\quad
[(e,\varphi),(f,\psi)]:=([e,f]+\varphi\;\sbul\;\psi,
e\ldot \varphi-f\ldot \psi).
\end{equation}
Usually we will denote
the elements of $\La$ by $e,f, \dots$, and the elements of
$\Sp$ by $\varphi,\psi,\ldots$.

Definition \refE{saprod} can be reformulated as an extension
of $[.,.]$ on  $\La$ to a ``super-bracket'' (denoted by the same
symbol) on $\Sa$ by setting
\begin{equation}\label{eq:sadef}
[e,\varphi]:=-[\varphi,e]:=e\ldot \varphi
={}_| (e\frac {d\varphi}{dz}-\frac 12\varphi\frac {de}{dz})(dz)^{-1/2}
\end{equation}
and
\begin{equation}
[\varphi,\psi]=\varphi\;\sbul\; \psi.
\end{equation}
We call the elements of $\La$ elements of even parity,
and the elements of $\Sp$ elements of odd parity.
For such elements $x$ we denote by $\bar x\in\{\bar 0,\bar 1\}$ their
parity. 

The sum \refE{saprod} can also  be described
as $\Sa=\Sa_{\bar 0}\oplus\Sa_{\bar 1}$, where $\Sa_{\bar i}$ is the
subspace of
elements of parity $\bar i$.
\begin{proposition}\label{P:knsuper}
The space $\Sa$ with the above introduced parity and product
is a Lie superalgebra.
\end{proposition}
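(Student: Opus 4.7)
The plan is to verify the two axioms of a Lie superalgebra with respect to the $\Z_2$-grading $\Sa_{\bar 0}=\La$, $\Sa_{\bar 1}=\Fl[-1/2]$: super-skew-symmetry and the super-Jacobi identity. First I would check that the bracket in \refE{saprod} is homogeneous of degree $\bar 0$: since $[\La,\La]\subset\La$, $\La\ldot\Fl[-1/2]\subset\Fl[-1/2]$, and $\Fl[-1/2]\sbul\Fl[-1/2]\subset\Fl[-1]=\La$, the parities match. For super-skew-symmetry, the case (even, even) is the Lie bracket of $\La$ which is already known to be antisymmetric, the case (even, odd) holds by the very definition \refE{sadef}, and for (odd, odd) one needs $[\varphi,\psi]=[\psi,\varphi]$, which is immediate from commutativity of the associative product $\sbul$ established in \refP{aass}.

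For the super-Jacobi identity
\[
(-1)^{\bar x\,\bar z}[x,[y,z]]+(-1)^{\bar y\,\bar x}[y,[z,x]]+(-1)^{\bar z\,\bar y}[z,[x,y]]=0,
\]
I would split into four parity types. The type (even, even, even) is the usual Jacobi identity for vector fields, already established. The type (even, even, odd), namely
\[
[e,[f,\varphi]]-[f,[e,\varphi]]=[[e,f],\varphi],
\]
is exactly the assertion that the Lie derivative makes $\Fl[-1/2]$ into a Lie module over $\La$, which was recorded earlier. The type (even, odd, odd) reads
\[
[e,[\varphi,\psi]]=[[e,\varphi],\psi]+[\varphi,[e,\psi]],
\]
i.e.\ $e\ldot(\varphi\sbul\psi)=(e\ldot\varphi)\sbul\psi+\varphi\sbul(e\ldot\psi)$; this is precisely the Leibniz rule for the Lie derivative with respect to $\sbul$, i.e.\ the Poisson compatibility of $[\cdot,\cdot]$ and $\sbul$ on $\mathcal{F}$, which is the content of the Poisson algebra assertion already proved.

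The main obstacle is the remaining case (odd, odd, odd), since here one cannot invoke the Poisson structure directly: the super-bracket $[\varphi,\psi]=\varphi\sbul\psi$ on odd elements is genuinely different from the form-bracket \refE{aliea} evaluated on $\Fl[-1/2]\times\Fl[-1/2]$. I would handle this by a direct local computation. Writing $\varphi=a\,dz^{-1/2}$, $\psi=b\,dz^{-1/2}$, $\chi=c\,dz^{-1/2}$, one has $[\psi,\chi]=bc\,dz^{-1}$, and then $[\varphi,[\psi,\chi]]=-(bc)\ldot\varphi=-\bigl(bc\,a'-\tfrac12 a(bc)'\bigr)dz^{-1/2}$ by \refE{sadef} and \refE{aliea}. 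Summing the three cyclic terms (with all signs $(-1)^{\bar 1\bar 1}=-1$, hence trivially reducing to ordinary cyclic sum) and collecting coefficients of $a'bc$, $ab'c$, $abc'$ separately, each group cancels exactly thanks to the coefficient $-\tfrac12$; the three derivative contributions distribute as $-1+\tfrac12+\tfrac12=0$ in each slot. The computation is coordinate-independent because all objects are sections of well-defined bundles; alternatively one verifies invariance under a change of chart using \refE{ktrans}. Since all four super-Jacobi cases reduce either to previously established facts or to this short local cancellation, $\Sa$ is a Lie superalgebra.
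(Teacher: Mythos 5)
Your proposal is correct and follows essentially the same route as the paper: the cases \emph{(even,even,even)}, \emph{(even,even,odd)} and \emph{(even,odd,odd)} are reduced to the Jacobi identity for $\La$, the Lie-module structure of $\Fl[-1/2]$, and the derivation (Leibniz/Poisson) property of $e\ldot$ with respect to $\sbul$, while \emph{(odd,odd,odd)} is settled by the same direct local computation, your cancellation $-1+\tfrac12+\tfrac12=0$ in each slot being exactly the paper's cyclic sum of $\bigl(\psi\chi\varphi'-\tfrac12\psi'\chi\varphi-\tfrac12\psi\chi'\varphi\bigr)(dz)^{-1/2}$. Your additional explicit checks of parity closure and super-skew-symmetry, and your remark that the odd-odd bracket is not the form-bracket \refE{aliea} so the Poisson structure cannot be invoked there, only make explicit what the paper treats as immediate from \refE{saprod}.
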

\begin{definition}
The algebra $\Sa$ is the 
Krichever - Novikov Lie superalgebra.
\end{definition}
Before we say a few words on the proof we recall the definition
of a Lie superalgebra.
Let $\Sa$ be a vector space which is decomposed into
even and odd elements
$\Sa=\Sa_{\bar 0}\oplus\Sa_{\bar 1}$,
i.e. $\Sa$ is a $\Z/2Z$-graded vector space.
Furthermore, let $[.,.]$ be a 
  $\Z/2Z$-graded bilinear map  $\Sa\times \Sa\to \Sa$ such
that for elements $x,y$ of pure parity
\begin{equation}\label{eq:ssup}
[x,y]=-(-1)^{\bar x \bar y}[y,x].
\end{equation}
This says that
\begin{equation}\label{eq:szg}
[\Sa_{\bar 0},\Sa_{\bar 0}]\subseteq \Sa_{\bar 0},\qquad
[\Sa_{\bar 0},\Sa_{\bar 1}]\subseteq \Sa_{\bar 1},\qquad
[\Sa_{\bar 1},\Sa_{\bar 1}]\subseteq \Sa_{\bar 0},
\end{equation}
and $[x,y]$ is symmetric for $x$ and $y$ odd, otherwise
anti-symmetric.
Recall that $\Sa$ is a Lie superalgebra if in addition
the super-Jacobi identity  ($x,y,z$ of pure parity)
\begin{equation}\label{eq:jacsup}
(-1)^{\bar x \bar z}[x,[y,z]]+
(-1)^{\bar y \bar x}[y,[z,x]]+
(-1)^{\bar z \bar y}[z,[x,y]]
=0
\end{equation}
is valid.
As long as the type of the arguments is different 
from {\it (even, odd, odd)} all signs can be put to
$+1$ and we obtain the form of the usual Jacobi identity.
In the remaining  case we get
\begin{equation}
[x,[y,z]]+[y,[z,x]]-[z,[x,y]]=0.
\end{equation}
By the definitions $\Sa_0$ is a Lie algebra.

\begin{proof}(\refP{knsuper})
By \refE{saprod} Equations \refE{ssup}
and \refE{szg} are true.
If we consider \refE{jacsup} for elements of type
{\it (even, even, even)} then it reduces to the usual Jacobi identity
which
is of course true for the subalgebra of vector fields $\La$.
For {\it (even, even, odd)} it is true as $\Sp$ is a Lie-module 
over $\La$. 
For  {\it (even, odd, odd)}
we get
\begin{equation}
[e,[\varphi,\psi]]+[\varphi,[\psi,e]]-[\psi,[e,\varphi]]=
e\ldot (\varphi\;\sbul\;\psi)-(e\ldot \psi)\;\sbul\;\varphi
-(e\ldot \varphi)\;\sbul\;\psi=0,
\end{equation}
as $e$ acts as derivation on $\Sp$. 
For {\it (odd, odd, odd)}
the super-Jacobi relation writes as
\begin{equation}
[\varphi,[\psi,\chi]]+\quad\text{cyclic permutation}\qquad =\quad0.
\end{equation}
Equivalently
\begin{equation}
- (\psi\;\sbul\;\chi)\ldot \varphi+\quad\text{cyclic permutation}\qquad=\quad0.
\end{equation}

Now (again identifying local representing functions with the
element)
\begin{multline}
(\psi\;\sbul\;\chi)\ldot \varphi=
\left((\psi\cdot\chi)\cdot \varphi' - 1/2((\psi\cdot \chi)'
\varphi)\right)(dz)^{-1/2}
=
\\
\left(\psi\chi\varphi' - 1/2\,\psi'\chi\varphi
-1/2\,\psi\chi'\varphi\right)(dz)^{-1/2}.
\end{multline}
Adding up all cyclic permutations yield zero.
\end{proof}
\begin{remark}
The above introduced Lie superalgebra corresponds 
classically to the  Neveu-Schwarz superalgebra. In string theory physicists
considered also the Ramond superalgebra as string algebra
(in the two-point case). 
The elements of the Ramond superalgebra do not correspond to
sections of the dual theta characteristics. They are only 
defined on a 2-sheeted branched covering of $\Sigma$, see e.g. \cite{BMRR},
\cite{BMTW}. Hence, the elements are only multi-valued sections.
As we only consider honest sections of half-integer powers of the
canonical bundle, we do not deal with the Ramond algebra here.

The choice of the theta characteristics corresponds to choosing
a spin structure on $\Sigma$. 
For the relation of the Neveu-Schwarz superalgebra to 
the geometry of graded 
Riemann surfaces see Bryant \cite{Bryant}.
\end{remark}
\section{Almost-graded structure}
\label{sec:almgrad}
\subsection{Definition of almost-gradedness}
Recall the  classical situation. This is   
the Riemann surface  $\Pro^1(\C)=S^2$, i.e. the Riemann surface 
of genus zero,
 and the points where
poles are allowed are $\{0,\infty\}$). In this case
 the algebras introduced in the last
chapter are graded algebras. 
In the higher genus case and even in the genus zero case with more
than
two points where poles are allowed there is no non-trivial grading
anymore.
As realized by Krichever and Novikov \cite{KNFa}
there is a weaker concept, an almost-grading which
to  a large extend is a valuable replacement of a honest grading.
Such an almost-grading is induced by a splitting of the set $A$ into
two non-empty and disjoint sets $I$ and $O$. 
The (almost-)grading is fixed by exhibiting certain basis elements 
in the spaces $\Fl$ as homogeneous.
\begin{definition}\label{D:almgrad} 
Let $\La$ be a Lie or an associative algebra such that
$\La=\oplus _{n\in\Z}\La_n$ is a vector space direct sum, then
$\La$ is called an \emph{almost-graded} 
(Lie-) algebra if
\begin{enumerate}
\item[(i)] $\dim \La_n<\infty$,
\item[(ii)]
There exist constants $L_1,L_2\in\Z$ such that 
\begin{equation}
\La_n\cdot \La_m\subseteq \bigoplus_{h=n+m-L_1}^{n+m+L_2}
\La_h,\qquad \forall n,m\in\Z.
\end{equation}
\end{enumerate}
Elements in $\La_n$ are called {\it homogeneous} elements of
degree $n$, and $\La_n$ is called homogeneous subspace of degree $n$.
\end{definition}
In a similar manner almost-graded modules over almost-graded algebras
are defined. Also of course, we can extend 
in an obvious way the definition to
superalgebras, resp. even to more general algebraic structures.
This definition makes complete sense also for more general index sets
$\J$. In fact we will consider the index set
$\J=(1/2)\Z$ for our superalgebra.
Our even elements (with respect to the super-grading) will have 
integer degree, our odd elements half-integer degree.

\subsection{Separating cycle and Krichever-Novikov duality}
\label{SS:kndual}
Let $C_i$ be  positively oriented (deformed) circles around
the points $P_i$ in $I$ and $C_j^*$ positively oriented ones around
the points $Q_j$ in $O$.

A cycle $C_S$ is called a separating cycle 
if it is smooth, positively oriented of multiplicity one and if 
it separates
the in- from the out-points. It might have multiple components. 
In the following we will integrate  
meromorphic differentials on $\Sigma$ without poles in 
$\Sigma\setminus A$ over closed curves $C$. 
Hence, we might consider   $C$ and $C'$ as
equivalent if $[C]=[C']$ in  $\Ho(\Sigma\setminus A,\Z)$.
In this sense
we can write 
for every separating cycle
\begin{equation}\label{eq:cs}
[C_S]=\sum_{i=1}^K[C_i]=-\sum_{j=1}^M [C^*_j].
\end{equation}
The minus sign appears due to the opposite orientation.
Another way for giving such a $C_S$ is also via
level lines of a ``proper time
evolution'', for which I refer to Ref.~\cite{SLc}.

Given such a separating cycle $C_S$ (resp. cycle class) 
we can define a linear map
\begin{equation}
\Fl[1]\to\C,\qquad \omega\mapsto \cins \omega.
\end{equation}
As explained above the map will not depend on 
the separating line $C_S$ chosen, as
two of such will be homologous and the poles of $\omega$ are only located in
$I$ and $O$.

Consequently,
the integration of $\omega$ over $C_S$ can 
also be described
over the 
special cycles $C_i$ or equivalently over $C_j^*$.
This integration 
corresponds to calculating residues
\begin{equation}\label{eq:res}
\omega\quad\mapsto\quad
\cins \omega\quad=\quad\sum_{k=i}^K\res_{P_i}(\omega)
\quad=\quad-\sum_{l=1}^{M}\res_{Q_l}(\omega).
\end{equation}

Furthermore,
\begin{equation}\label{eq:kndual}
\Fl\times\Fl[1-\la]\to \C,\quad
(f,g)\mapsto\kndual{f}{g}:=\cins f\cdot g,
\end{equation}
gives a well-defined pairing, called the
{\it Krichever-Novikov (KN) pairing}.
\subsection{The homogeneous subspaces}
Depending on whether $\la$ is integer or half-integer, we set
$\Jl=\Z$ or $\Jl=\Z+1/2$.
For $\Fl$ we introduce for $m\in\Jl$ 
subspaces $\Fl_m$ of dimension $K$, where
$K=\#I$, 
by exhibiting  certain elements  $f_{m,p}^\la\in \Fl$,
$ p=1,\ldots, K$ which constitute a  basis of   $\Fl_m$.
Recall that the spaces $\Fl$ for $\lambda\in\Z+1/2$ depend
on the square root $L$ (the 
theta characteristic) of the canonical bundle chosen.
The elements of  $\Fl_m$ are the elements of degree $m$.

Let $I=\{P_1,P_2,\ldots, P_K\}$ 
then the basis element $f^\la_{m,p}$ of degree $m$ is of order 
\begin{equation}
\ord_{P_i}(f^\la_{m,p})
=(n+1-\la)-\delta_{i}^p 
\end{equation}
at the point $P_i\in I$,  $i=1,\ldots, K$.
The prescription at the points in $O$ is made in such a way
that the element  $f^\la_{m,p}$ is essentially uniquely given.
Essentially unique means up to multiplication with a constant%
\footnote{Strictly speaking, there are some special cases where
some constants  have to be added such  that the Krichever-Novikov
duality \refE{knd} below is valid, see \cite{SLc}.}.
After fixing as additional geometric data, a system of coordinates
$z_l$ centered at $P_l$ for $l=1,\ldots, K$ 
and requiring that 
\begin{equation}\label{eq:fnorm}
f_{n,p}^\la(z_p)=z_p^{n-\la}(1+O(z_p))(dz_p)^\la
\end{equation}
the element $f_{n,p}$ is uniquely fixed.
In fact, the  element $f_{n,p}^\la$ only depends on the first
jet of the coordinate $z_p$ \cite{SSpt}.

\begin{example}
Here we will not give the general recipe for the
prescription at the points in $O$, see 
\cite{SLc}, \cite{SDiss}, \cite{Schlknbook}.
Just to give an example which is also an
important special case, assume $O=\{Q\}$ is a one-element
set. If either the genus $g=0$, or  $g\ge 2$, $\la\ne 0,\, 1/2,\, 1$ 
 and the points in
$A$ are in generic position 
then we require     
\begin{equation}
\ord_{Q}(f^\la_{n,p})
=-K\cdot(n+1-\la)+(2\la-1)(g-1).
\end{equation}
In the other cases (e.g. for $g=1$) there are 
some modifications at the point in $O$ necessary
for finitely
many $m$.
\end{example}

The construction yields \cite{SLc}, \cite{SDiss}, \cite{Schlknbook}
\begin{theorem}\label{T:basis}
Set
\begin{equation}\label{eq:kbasis}
\mathcal{B}^\la:=\{\, 
 f_{n,p}^\la\mid n\in\Jl,\  p=1,\ldots, K\,\}.
\end{equation}
Then 
(a) $\mathcal{B}^\la$ is a basis of the vector space $\Fl$.

(b) 
The introduced basis $\mathcal{B}^\la$ of  
$\Fl[\la]$ and  $\mathcal{B}^{1-\la}$ 
of $\Fl[1-\la]$ 
are dual to each other with respect to the
Krichever-Novikov pairing \refE{kndual}, i.e.  
\begin{equation}\label{eq:knd}
\kndual{f_{n,p}^\la}{f_{-m,r}^{1-\la}}=
\delta_p^r\;\delta_n^m, \quad \forall n,m\in\Jl,\quad 
r,p=1,\ldots, K.
\end{equation}
\end{theorem}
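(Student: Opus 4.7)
The plan splits into (a) showing $\mathcal{B}^\la$ is a basis, and (b) verifying the pairing formula. For (a), linear independence is a leading-order argument at the in-points. Given a finite relation $\sum c_{n,p}\,f_{n,p}^\la = 0$, let $n_0$ be the smallest degree appearing with a nonzero coefficient. The normalization \refE{fnorm} says that at each in-point $P_i$ the element $f^\la_{n_0,i}$ has expansion beginning with $z_i^{n_0-\la}(dz_i)^\la$, while every other contributing term vanishes to higher order at $P_i$: for $n=n_0$ and $p\neq i$ the order is $\geq n_0+1-\la$, and for $n>n_0$ the order is $\geq n+1-\la>n_0-\la$. Reading off the coefficient of $z_i^{n_0-\la}$ forces $c_{n_0,i}=0$ for every $i$, contradicting the choice of $n_0$. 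Spanning then follows from a Riemann--Roch dimension count: for each $m\in\Jl$ the subspace cut out by the prescribed orders at $I$ and at $O$ is exactly $K$-dimensional, and letting $m$ vary exhausts $\Fl$; this step relies on the detailed prescription at $O$ recorded in \cite{SLc,SDiss,Schlknbook}.

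For (b), compute the order at $P_i$ of the weight-one product $f^\la_{n,p}\cdot f^{1-\la}_{-m,r}$ obtained from \refE{afl}:
\begin{equation*}
\ord_{P_i}\bigl(f^\la_{n,p}\cdot f^{1-\la}_{-m,r}\bigr)=(n-m+1)-\delta_i^p-\delta_i^r.
\end{equation*}
If $n>m$ this is $\geq n-m-1\geq 0$, so the product is holomorphic at every in-point and \refE{res} gives pairing zero. If $n=m$ the order at $P_i$ equals $1-\delta_i^p-\delta_i^r$, which is $-1$ exactly when $i=p=r$ and $\geq 0$ otherwise. For $p\neq r$ no residue survives at any $P_i$; for $p=r$ the normalization \refE{fnorm} yields the local expansion $z_p^{-1}(1+O(z_p))\,dz_p$ at $P_p$ with residue $1$. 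Together these give $\kndual{f^\la_{n,p}}{f^{1-\la}_{-n,r}}=\delta_p^r$.

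The remaining case $n<m$ is the main obstacle, because the product can have genuine poles at the in-points and the residues at $I$ do not vanish individually. The remedy is to use \refE{cs} and the residue theorem to push the calculation to the out-side, turning $\cins\omega$ into $-\sum_j\res_{Q_j}\omega$. The construction of the basis elements at the points of $O$ is designed precisely so that the sum of orders of $f^\la_{n,p}$ and $f^{1-\la}_{-m,r}$ at every $Q_j$ is non-negative whenever $n<m$ (with the finitely many exceptional low-genus or special-$\la$ configurations handled by the corrections alluded to in the example and in \cite{SLc}). Thus the product is holomorphic at every out-point, every residue at $O$ vanishes, and by \refE{cs} the pairing is zero. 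Combining the three cases yields the duality \refE{knd} and completes the proof.
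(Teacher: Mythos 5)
Your proposal is correct and takes essentially the approach the paper relies on: the paper gives no proof of this theorem itself but defers to \cite{SLc}, \cite{SDiss}, \cite{Schlknbook}, where exactly this argument is carried out --- leading-order linear independence at the points of $I$, a Riemann--Roch dimension count for spanning, and the duality \refE{knd} via the order computation $\ord_{P_i}(f^\la_{n,p}\cdot f^{1-\la}_{-m,r})=(n-m+1)-\delta_i^p-\delta_i^r$ for $n\ge m$ together with pushing the integral to $O$ by \refE{cs} for $n<m$, where the prescription at the out-points forces holomorphy (with the footnoted exceptional cases requiring the corrective constants, which you duly acknowledge). One harmless slip: for $n>n_0$ and $p=i$ the order at $P_i$ is exactly $n-\la$, not $\ge n+1-\la$ as you wrote, but since $n-\la\ge n_0+1-\la>n_0-\la$ your conclusion $c_{n_0,i}=0$ is unaffected.
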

{}From part (b) of the theorem it follows that the  
Krichever-Novikov pairing is non-degenerate. Moreover,
any element $v\in\Fl[1-\la]$ acts as linear form on $\Fl$ via
\begin{equation}
  \Fl\mapsto \C,\quad
w\mapsto \Phi_v(w):=\kndual{v}{w}.
\end{equation}
Via this pairing $\Fl[1-\la]$ can be considered as subspace of
$({\Fl})^*$. But I like to stress the fact that the identification depends
on the splitting of $A$ into $I$ and $O$ as the
KN pairing depends on it.
The full space $({\Fl})^*$ can even be described with the help
of the pairing. Consider the series
\begin{equation}\label{eq:infin}
\hat v:=\sum_{m\in\Z}\sum_{p=1}^K a_{m,p}f_{m,p}^{1-\la} 
\end{equation}
as a formal series, then
$\Phi_{\hat v}$ (as a distribution) is a well-defined  element of
${\Fl}^*$, as it will be only evaluated for finitely many
basis elements in $\Fl$.
Vice versa, every element of  ${\Fl}^*$ can be given by a suitable
$\hat v$. 
Every $\phi\in(\Fl)^*$ is uniquely given by the scalars 
$\phi(f_{m,r}^\la)$.
We set 
\begin{equation}
\hat v:=\sum_{m\in\Z}\sum_{p=1}^K \phi(f_{-m,p}^\la)\,f_{m,p}^{1-\la}. 
\end{equation}
Obviously, $\Phi_{\hat v }=\phi$.
For more information about this ``distribution
interpretation''
see \cite{SDiss}, \cite{SchlHab}.

The dual elements of $\La$ 
will be given by the formal series 
\refE{infin}  with basis elements from $\Fl[2]$, 
the quadratic differentials, and the
dual elements of $\Sp$ correspondingly from   $\Fl[3/2]$.
The spaces $\Fl[2]$ and $\Fl[3/2]$ themselves can be
considered as some kind of restricted duals.

It is quite convenient to use special notations for elements
of some important weights:
\begin{equation}
e_{n,p}:=f_{n,p}^{-1},
\quad
\varphi_{n,p}:=f_{n,p}^{-1/2},
\quad
\A_{n,p}:=f_{n,p}^{0}.
\end{equation}

\subsection{The algebras}
\begin{proposition}
\label{P:coeff}
There exist constants $R_1$ and  $R_2$ 
(depending on  the number and splitting of the points
in $A$ and of the genus $g$)  
independent of 
$n,m\in\J$ such that for the basis elements  
\begin{equation}\label{eq:coeff}
\begin{aligned}[]
f_{n,p}^\la\,\sbul\, f_{m,r}^\nu=&
\quad f_{n+m,r}^{\la+\nu}\delta_p^r
\\ &\qquad\qquad
+\sum_{h=n+m+1}^{n+m+R_1}\sum_{s=1}^Ka_{(n,p)(m,r)}^{(h,s)}f_{h,s}^{\la+\nu},
\quad a_{(n,p)(m,r)}^{(h,s)}\in\C,
\\ \hbox{}
\\
[f_{n,p}^\la, f_{m,r}^\nu]=&
\quad
(-\la m+\nu n)\,f_{n+m,r}^{\la+\nu+1}\delta_p^r
\\
&\qquad\qquad\qquad +
\sum_{h=n+m+1}^{n+m+R_2}\sum_{s=1}^Kb_{(n,p)(m,r)}^{(h,s)}f_{h,s}^{\la+\nu+1},
\quad b_{(n,p)(m,r)}^{(h,s)}\in\C.
\end{aligned}
\end{equation}
\end{proposition}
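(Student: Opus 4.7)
The plan is to combine the local order prescriptions at the in-points (which pin down the leading terms) with the order behaviour at the out-points (which forces the expansion to terminate at a uniformly bounded degree). Both $f_{n,p}^\la\sbul f_{m,r}^\nu$ and $[f_{n,p}^\la,f_{m,r}^\nu]$ lie in $\Fl[\la+\nu]$, respectively $\Fl[\la+\nu+1]$, so by \refT{basis} each has a unique finite expansion in the corresponding $\mathcal{B}^\mu$-basis.

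First I would carry out the local computation at each $P_i\in I$. Using the normalisation \refE{fnorm} and $\ord_{P_i}(f_{n,p}^\la)=(n+1-\la)-\delta_i^p$, the product $f_{n,p}^\la\sbul f_{m,r}^\nu$ has order $(n+m+2-\la-\nu)-\delta_i^p-\delta_i^r$ at $P_i$. Since $\ord_{P_i}(f_{h,s}^{\la+\nu})=(h+1-\la-\nu)-\delta_i^s$ and the basis elements of smallest order at $P_i$ are those with $s=i$, matching orders rules out any basis term with $h<n+m$, and shows that a contribution at $h=n+m$ can only come from $s=i=p=r$; the normalisation \refE{fnorm} of both factors then fixes its coefficient to $1$. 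This produces the diagonal $\delta_p^r f_{n+m,r}^{\la+\nu}$ and shifts the remainder to $h\ge n+m+1$. The same analysis applied to \refE{aliea} gives the leading coefficient of the bracket at $P_p=P_r$,
\begin{equation*}
(-\la)(m-\nu)+\nu(n-\la)=-\la m+\nu n,
\end{equation*}
and hence the diagonal term $(-\la m+\nu n)\delta_p^r f_{n+m,r}^{\la+\nu+1}$.

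The main obstacle is the uniform upper bound $h\le n+m+R_1$ (respectively $R_2$), and for this one uses the out-points. By the construction of the basis (see the example and \cite{SLc}, \cite{SDiss}, \cite{Schlknbook}), for each $j=1,\ldots,M$ the map $n\mapsto\ord_{Q_j}(f_{n,p}^\la)$ is affine in $n$ outside a finite exceptional set, with slope depending only on $K,M,g,\la,j$. Consequently the pole order of the product at each $Q_j$ is dominated by an affine function of $n+m$ with a correction uniform in $n,m,p,r$. Since $\ord_{Q_j}(f_{h,s}^{\la+\nu})$ is likewise affine in $h$ and decreases as $h$ grows, any basis element of degree $h>n+m+R_1$ would carry a pole at some $Q_j$ strictly larger than that of the product; by linear independence its coefficient in the unique expansion must then vanish. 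Choosing $R_1$ large enough to absorb both the slope constants and the finite exceptional-set corrections — all of which depend only on the geometric data $(\Sigma,A,I,O,\la,\nu)$ — yields the desired uniform bound. For the bracket the additional derivative raises each pole order at $Q_j$ by at most one, which is absorbed into a possibly larger uniform constant $R_2$.
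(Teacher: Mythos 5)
Your overall strategy is the paper's: the lower bound and the leading coefficient come from local order computations at the points of $I$, the uniform bounds $R_1,R_2$ from the order behaviour at the points of $O$. The in-point half of your sketch is sound — at $P_s$ only the basis element $f^{\mu}_{h,s}$ has its order lowered by one, so in the block of minimal degree $h_*$ no cancellation can occur between different $s$, forcing $h_*\ge n+m$, and the normalisation \refE{fnorm} pins the degree-$(n+m)$ coefficient; your computation $(-\la)(m-\nu)+\nu(n-\la)=-\la m+\nu n$ is correct.

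The concluding step of your out-point argument, however, has a genuine gap: ``any basis element of degree $h>n+m+R_1$ would carry a pole at some $Q_j$ strictly larger than that of the product; by linear independence its coefficient in the unique expansion must then vanish.'' Pole-order comparison constrains individual basis elements, not linear combinations. For $\#O=1$ — precisely the example displayed in the paper — the order $\ord_Q(f^{\mu}_{h,s})$ is \emph{independent} of $s$, so the leading pole parts of same-degree terms can cancel, and a nonzero combination $\sum_s c_s f^{\mu}_{h,s}$ may have a strictly shallower pole at $Q$ than each summand; linear independence in $\Fl[\mu]$ gives you nothing against this. One can bound the possible cancellation by Riemann--Roch (the order at $Q$ of a nonzero element of the degree-$h$ subspace exceeds the generic value by at most a constant depending only on $g$, $K$, $\mu$), but even then comparing the top-degree block with the lower-degree tail is delicate, since the lower-degree terms have shallower poles at $Q$ and the top block's pole cannot be read off from $F$ directly. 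The paper's proof closes exactly this point by the tool you never invoke, the Krichever--Novikov duality of \refT{basis}: the coefficient of $f^{\mu}_{h,s}$ in the expansion of $F$ equals $\kndual{F}{f^{1-\mu}_{-h,s}}$, which by \refE{res} can be evaluated as $-\sum_j\res_{Q_j}\bigl(F\cdot f^{1-\mu}_{-h,s}\bigr)$. Since $\ord_{Q_j}(f^{1-\mu}_{-h,s})$ grows affinely in $h$ while the pole order of $F$ at the $Q_j$ is affinely bounded in terms of $n+m$ (your estimate), the integrand is holomorphic at every $Q_j$ once $h>n+m+R_1$, so each coefficient vanishes individually, with no cancellation issue. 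Your affine-order estimates are the right input; feeding them into the duality pairing rather than into a pole-size comparison repairs the argument.
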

\begin{proof}
For the elements on the l.h.s. of \refE{coeff} we can estimate
the maximal pole orders at the points in $I$ and $O$. Using the KN duality
and the prescribed orders of the basis elements we obtain
by considering possible pole orders at the points in $I$
the lower
bound of the degree, and by considering the pole orders
at $O$ the upper bounds $R_1$ and $R_2$ for the degree on the r.h.s.. 
The degree $n+m$ part follows from local calculations at 
the points in $I$.
See \cite{SLc}, \cite{SDiss}, \cite{Schlknbook} for more details.
\end{proof}

As a direct consequence we obtain
\begin{theorem}\label{T:almgrad}
The algebras  $\La$ and  $\Sa$ are almost-graded Lie , resp.
Lie superalgebras.
The almost-grading depends on the splitting of the set $A$ into $I$
and $O$. 
More precisely,
\begin{equation}
\Fl=\bigoplus_{m\in\Jl}\Fl_m,\qquad\text{with}\quad \dim \Fl_m=K.
\end{equation}
and there exist $R_1,R_2$ (independent of $n$ and
$m$)
such that
\begin{equation*}
[\La_n,\La_m] \subseteq \bigoplus_{h=n+m}^{n+m+R_1}\La_h\ ,
 \qquad
[\Sa_n, \Sa_m] \subseteq \bigoplus_{h=n+m}^{n+m+R_2}\Sa_h.
\end{equation*}
\end{theorem}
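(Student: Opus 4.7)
The plan is to deduce both assertions directly from \refT{basis} and \refP{coeff}, packaging the superalgebra case by splitting the bracket according to the parities of its arguments.

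First I would establish the homogeneous decomposition. By \refT{basis}(a), the set $\mathcal{B}^\la=\{f^\la_{n,p}\mid n\in\Jl,\,p=1,\ldots,K\}$ is a basis of $\Fl$, so setting
\begin{equation*}
\Fl_m:=\mathrm{span}_\C\{f^\la_{m,1},\ldots,f^\la_{m,K}\}
\end{equation*}
yields $\Fl=\bigoplus_{m\in\Jl}\Fl_m$ with $\dim\Fl_m=K$. Specializing to $\la=-1$ and $\la=-1/2$ gives the decompositions $\La=\bigoplus_{n\in\Z}\La_n$ and $\Sa=\bigoplus_{n\in(1/2)\Z}\Sa_n$, where the integer degrees correspond to the even part $\La$ and the half-integer degrees to the odd part $\Fl[-1/2]$. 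In particular $\dim\La_n<\infty$ and $\dim\Sa_n<\infty$.

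Next I would read off the bracket bounds from \refP{coeff}. Taking $\la=\nu=-1$ in the bracket part of \refE{coeff}, the bracket of two basis vector fields $e_{n,p}$ and $e_{m,r}$ is a combination of basis elements $e_{h,s}$ with $n+m\le h\le n+m+R_2$; the leading term at $h=n+m$ comes from the $(-\la m+\nu n)\delta_p^r$ contribution and all higher-degree terms are controlled uniformly by the constant $R_2$ of the proposition, which depends only on $(A,g)$. Renaming this constant $R_1$ gives the stated bound
\begin{equation*}
[\La_n,\La_m]\subseteq\bigoplus_{h=n+m}^{n+m+R_1}\La_h.
\end{equation*}

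For $\Sa$, I would use the $\Z/2\Z$-grading \refE{szg} to reduce to three cases of basis-element brackets and then take a common bound. The case $[\Sa_{\bar 0},\Sa_{\bar 0}]$ is exactly the $\La$-bound just obtained. For $[\Sa_{\bar 0},\Sa_{\bar 1}]$ we apply the bracket part of \refP{coeff} with $\la=-1$, $\nu=-1/2$, which expresses $[e_{n,p},\varphi_{m,r}]$ as a combination of $\varphi_{h,s}$ with $n+m\le h\le n+m+R_2'$ for some constant $R_2'$; this indeed lands in $\Fl[-1/2]$, as required, because $\la+\nu+1=-1/2$. For $[\Sa_{\bar 1},\Sa_{\bar 1}]$ we invoke the associative-product part of \refP{coeff} with $\la=\nu=-1/2$: the product $\varphi_{n,p}\sbul\varphi_{m,r}$ is a combination of $e_{h,s}$ with $n+m\le h\le n+m+R_1''$ for some constant $R_1''$, lying in $\La=\Fl[-1]$ since $\la+\nu=-1$, and by definition \refE{saprod} this product is $[\varphi_{n,p},\varphi_{m,r}]$. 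Setting $R_2:=\max(R_1,R_2',R_1'')$ gives the uniform bound
\begin{equation*}
[\Sa_n,\Sa_m]\subseteq\bigoplus_{h=n+m}^{n+m+R_2}\Sa_h,
\end{equation*}
which finishes the verification of \refD{almgrad}.

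The main obstacle is, in truth, already absorbed into \refP{coeff}: establishing that the upper bound on the degrees of the terms appearing in a product or bracket is \emph{uniform in $n,m$} requires the residue/KN-duality analysis of pole orders at the points in $O$ which was sketched there. Once that uniformity is granted, the almost-grading statement for both $\La$ and $\Sa$ is essentially a matter of sorting the three parity cases and collecting the constants; the only subtlety worth checking explicitly is that each of the three super-brackets lands in the correct parity component of $\Sa$, which is what pins down why the weights $\la=-1$ and $\la=-1/2$ must be combined in exactly this fashion.
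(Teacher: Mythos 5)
Your proposal is correct and follows essentially the same route as the paper, which obtains the theorem ``as a direct consequence'' of \refP{coeff} together with the basis statement \refT{basis}; your write-up merely spells out what the paper leaves implicit, namely the three parity cases $(\la,\nu)=(-1,-1)$, $(-1,-\tfrac12)$ (Lie bracket, landing in $\Fl[\la+\nu+1]$) and $(-\tfrac12,-\tfrac12)$ (the product $\sbul$, landing in $\Fl[-1]$), and the choice of a common constant. You also correctly locate the genuine analytic content---the uniformity in $n,m$ of the degree bounds, proved via pole-order estimates at the points of $O$ and the Krichever--Novikov duality---inside \refP{coeff}, exactly where the paper places it.
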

The constants $R_i$ depend on the genus of the Riemann surface and  the
number of points in $I$ and $O$. In fact they
can be explicitly calculated (if needed).

Also from \refE{coeff} we can directly conclude
\begin{proposition}\label{P:leading}
For all $m,n\in \Jl$ and $r,p=1,\ldots,K$ we have
\begin{equation}\label{eq:coeffalm}
\begin{aligned}[]
[e_{n,p},e_{m,r}]&=(m-n)\cdot e_{n+m,r}\,\delta_r^p+\text{h.d.t.}
\\
e_{n,p}\ldot \varphi_{m,r}&=
(m-\frac{n}{2})\cdot \varphi_{n+m,r}\,\delta_r^p+\text{h.d.t.}
\\
\varphi_{n,p}\;\sbul\;\varphi_{m,r}&=e_{n+m,r}\,\delta_r^p+\text{h.d.t.}
\end{aligned}
\end{equation}
Here $\text{h.d.t.}$ denote linear combinations of 
basis elements of degree between
$n+m+1$ and $n+m+R_2$, with the $R_2$ from \refT{almgrad}.
\end{proposition}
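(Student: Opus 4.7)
The plan is to derive all three identities directly from \refP{coeff} by specializing to the weights that correspond to vector fields and to half-forms under the notational conventions
\begin{equation*}
e_{n,p}=f_{n,p}^{-1},\qquad \varphi_{n,p}=f_{n,p}^{-1/2}.
\end{equation*}
The h.d.t.\ contributions claimed on the right-hand sides will match exactly the sums $\sum_{h=n+m+1}^{n+m+R_i}\sum_{s=1}^K(\bdot)\,f^{\mu}_{h,s}$ appearing in \refP{coeff}, so no new degree estimate is needed; only the three leading coefficients and weights have to be read off.

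First, for $[e_{n,p},e_{m,r}]$ I would substitute $\la=\nu=-1$ into the bracket formula of \refP{coeff}. The leading coefficient $-\la m+\nu n$ becomes $m-n$, and the output weight $\la+\nu+1=-1$ identifies the leading basis element as $f_{n+m,r}^{-1}\delta_p^r=e_{n+m,r}\delta_p^r$, giving the first identity. Second, for the Lie derivative $e_{n,p}\ldot \varphi_{m,r}$ I would recall from the super-bracket definition \refE{sadef} that
\begin{equation*}
[e,\varphi]=e\ldot\varphi,
\end{equation*}
so this action coincides with the bracket of \refP{coeff} applied to weights $\la=-1$ and $\nu=-1/2$. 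The leading coefficient is then $-\la m+\nu n= m-\tfrac{n}{2}$, and the output weight $\la+\nu+1=-1/2$ identifies the leading basis element as $\varphi_{n+m,r}\delta_p^r$.

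Third, for $\varphi_{n,p}\sbul\varphi_{m,r}$, which is the associative product of two half-forms rather than a bracket, I would apply the first (associative) formula of \refP{coeff} with $\la=\nu=-1/2$. The leading coefficient is simply $\delta_p^r$ and the output weight $\la+\nu=-1$ identifies the leading basis element as $e_{n+m,r}$, yielding the third identity.

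The main obstacle is essentially trivial at this stage: all of the hard work (local expansion of the basis elements near the in-points via the normalization \refE{fnorm}, and the control on pole orders at the out-points that bounds the range of the higher contributions) has been absorbed into the proof of \refP{coeff}. The only small bookkeeping issues are (i) checking that the super-bracket convention in \refE{sadef} is compatible with the bracket formula from \refP{coeff} when one argument is odd, which it is by definition, and (ii) verifying that a common value $R_2$ (as in \refT{almgrad}) can be used as the upper bound on the range of h.d.t.\ in all three formulas, which follows by replacing $R_1$ by $\max(R_1,R_2)$ if necessary.
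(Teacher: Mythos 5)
Your proposal is correct and follows essentially the same route as the paper, which states \refP{leading} as a direct consequence of \refP{coeff}; your explicit weight specializations $(\la,\nu)=(-1,-1)$, $(-1,-\tfrac12)$, $(-\tfrac12,-\tfrac12)$ are exactly the intended reading-off of the leading coefficients $-\la m+\nu n$, resp.\ $\delta_p^r$, and of the output weights. The two bookkeeping points you raise --- compatibility of the super-bracket \refE{sadef} with the bracket formula of \refP{coeff}, and using a common bound $\max(R_1,R_2)$ for the h.d.t.\ range --- are handled tacitly in the paper in the same way.
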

See \refS{classext} for an example in 
the classical case  (by ignoring the central
extension appearing there for the moment).

\begin{remark}
Note that in certain literature for the classical situation some other
normalisation of the structure equation of the superalgebra
was given. Instead of the product of the -1/2 forms twice the
product was used (corresponding to  the anti-commutator). These results
can also be obtained by setting
$\varphi_{n,p}=\sqrt{2}f_{n,p}^{-1/2}$.
The last line of \refE{coeffalm} will then start with  
$2e_{n+m,p}\delta_p^r$. This has also consequences for
the structure equation  of the central extension
\refE{classcent}.
\end{remark}

On the basis of the almost-grading we obtain a triangular
decomposition of the algebras
\begin{equation} 
\La=\La_{[+]}\oplus\La_{[0]}\oplus\La_{[-]},\qquad\quad
\Sa=\Sa_{[+]}\oplus\Sa_{[0]}\oplus\Sa_{[-]}
\end{equation}
where e.g.
\begin{equation}
\Sa_{[+]}:=\bigoplus_{m>0}\Sa_m,\quad
\Sa_{[0]}=\bigoplus_{m=-R_2}^{m=0}\Sa_m,\quad
\Sa_{[-]}:=\bigoplus_{m<-R_2}\Sa_m.
\end{equation}
By the almost-graded structure the $[+]$ and
$[-]$ subspaces are indeed 
(infinite dimensional) subalgebras. The $[0]$ spaces in general
are not subalgebras.

\begin{remark}\label{R:filt}
In case that $O$ has more than one point
there are certain choices, e.g. numbering of the points in
$O$, different rules, etc. involved. 
Hence, if the choices are made differently the subspaces $\Fl_n$ 
might depend on them, and consequently also the almost-grading.
But as it is shown in the above quoted works  the 
induced filtration
\begin{equation}
\begin{gathered}
\Fl_{(n)}:=\bigoplus_{m\ge n} \Fl_m,
\\ ....\quad
\supseteq\quad  \Fl_{(n-1)}\quad
\supseteq \quad\Fl_{(n)}\quad
\supseteq \quad \Fl_{(n+1)}\quad ....
\end{gathered}
\end{equation}
has an intrinsic meaning
given  by
\begin{equation}
\Fl_{(n)}:=\{\ f\in\Fl\mid \ord_{P_i}(f)\ge n-\la,\forall i=1,\ldots, K\
\}\; .
\end{equation}
Hence it is independent of  these choices.
\end{remark}
\begin{remark}\label{jordan}
Leidwanger and Morier-Genoux introduced in \cite{LeiMor} also a
Jordan superalgebra based on the Krichever-Novikov objects, i.e.
\begin{equation}
\mathcal{J}:=\Fl[0]\oplus\Fl[-1/2]=
\mathcal{J}_{\bar 0}\oplus
\mathcal{J}_{\bar 1}.
\end{equation}
Recall that $\Fl[0]$ is the associative algebra of 
meromorphic functions.
The (Jordan) product is defined via the algebra structure 
introduced in \refS{algebra} for the spaces $\Fl$ by
\begin{equation}
\begin{aligned}[]
f\circ g&:=f\;\sbul\; g\quad \in\Fl[0],
\\
f\circ\varphi&:=f\;\sbul\; \varphi \quad \in\Fl[-1/2]
\\
\varphi\circ\psi&:=[\varphi,\psi]\quad \in\Fl[0].
\end{aligned}
\end{equation}
By rescaling the second definition with the factor 1/2 one obtains
a Lie antialgebra. See \cite{LeiMor} for more details
and additional results on representations.

Here I only want to add the following. Using the results presented
in this section one easily sees that with respect to the
almost-grading introduced (depending on a splitting 
$A=I\cup O$) the Jordan superalgebra  becomes indeed an
almost-graded algebra
\begin{equation}
\mathcal{J}=\bigoplus_{m\in 1/2\Z}\mathcal{J}_m.
\end{equation}
Hence, it makes sense to call it a Jordan superalgebra of
KN type.
Calculated for the introduced basis elements 
we get (using \refP{coeff})
\begin{equation}\label{eq:coeffjord}
\begin{aligned}[]
A_{n,p}\circ A_{m,r}&=A_{n+m,r}\,\delta_r^p+\text{h.d.t.}
\\
A_{n,p}\circ \varphi_{m,r}&=
\varphi_{n+m,r}\,\delta_r^p+\text{h.d.t.}
\\
\varphi_{n,p}\circ\varphi_{m,r}&=\frac 12(m-n)\A_{n+m,r}\,\delta_r^p+\text{h.d.t.}
\end{aligned}
\end{equation}
\end{remark}

\section{Central extensions}
\label{sec:centvec}

In this section we recall the results which are needed 
about central extensions of the 
vector field algebra in the following discussion of central
extensions of the Lie superalgebras.
More details can be found in \cite{SLc}, \cite{SDiss}, \cite{Scocyc}.

A central extension of a Lie algebra $W$ is defined on the
vector space direct sum 
\newline
$\widehat{W}=\C\oplus W$.
If we denote  $\hat x:=(0,x)$ and $t:=(1,0)$ then its Lie structure is
given by 
\begin{equation}\label{eq:cext}
[\hat x, \hat y]=\widehat{[x,y]}+\Phi(x,y)\cdot t,
\quad [t,\widehat{W}]=0,
\quad x,y\in W.
\end{equation}
$\widehat{W}$ will be a Lie algebra, e.g. fulfill the 
Jacobi identity, if and only if $\Phi$ is antisymmetric and fulfills
the Lie algebra 2-cocycle condition
\begin{equation}
0=d_2\Phi(x,y,z):=
\Phi([x,y],z)+
\Phi([y,z],x)+
\Phi([z,x],y).
\end{equation}
There is the notion of equivalence of central extensions. It turns out
that two central extensions are equivalent if and only if the difference of 
their defining
2-cocycles $\Phi$ and $\Phi'$ is a coboundary, i.e. there exists 
a $\phi:W\to\C$ such that 
\begin{equation}
\Phi(x,y)-\Phi'(x,y)=d_1\phi(x,y)=\phi([x,y]).
\end{equation}
In this way the second Lie algebra cohomology $\Ho^2(W,\C)$ 
of $W$ with 
values in the trivial module $\C$ classifies equivalence classes
of central extensions. The class $[0]$ corresponds to the
trivial (i.e. split) central extension. 
Hence, to construct central extensions of our Lie algebras we have to 
find such Lie algebra 2-cocycles. 

We want to generalize the cocycle which defines in the classical
case the Virasoro algebra to higher genus and the multi-point 
situation. We have to geometrize the cocycle.
Before we can give this 
geometric description, we have to introduce the notion of a  
{\it projective connection}
\begin{definition}
Let $\ (U_\al,z_\al)_{\al\in J}\ $ be a covering of the Riemann surface
by holomorphic coordinates, with transition functions
$z_\be=f_{\be\al}(z_\al)$.
A system of local holomorphic functions 
$\ R=(R_\al(z_\al))\ $ 
is called a holomorphic {\it projective 
connection} if it transforms as
\begin{equation}\label{eq:pc}
R_\be(z_\be)\cdot (f_{\beta,\alpha}')^2=R_\al(z_\al)+S(f_{\beta,\alpha}),
\qquad\text{with}\quad
S(h)=\frac {h'''}{h'}-\frac 32\left(\frac {h''}{h'}\right)^2,
\end{equation}
the Schwartzian derivative.
Here ${}'$ means  differentiation with respect to
the coordinate $z_\al$.
\end{definition}
It is a classical result 
\cite{HawSchiff}, \cite{Gun} that 
every Riemann surface admits a holomorphic projective connection $R$.
{}From the definition it follows that the difference of two
projective
connections is a quadratic differential. In fact starting from one
projective projection we will obtain all of them by adding quadratic
differentials to it. 

\medskip

Given a smooth differentiable curve $C$
(not necessarily connected) and
a fixed holomorphic projective connection $R$,
the following  defines
 for the vector field algebra a two-cocycle 
\begin{equation}\label{eq:vecg}
\Phi_{C,R}(e,f):=\cintl{C} \left(\frac 12(e'''f-ef''')
-R\cdot(e'f-ef')\right)dz\ .
\end{equation}
Only by the term involving the 
 projective connection it will be
a well-defined differential, i.e. independent of the 
chosen coordinates.
It is shown in Ref.~\cite{SDiss} that it 
is a cocycle. Another choice of a projective connection
will result in a cohomologous one, see also \refE{proj}

In contrast to the classical situation, 
for the higher genus and/or multi-point situation
there are many essentially
different closed curves and also many non-equivalent central
extensions defined by the integration.

But we should take into account that we want to extend the
almost-grading from our algebras to the centrally extended ones.
This means we take $\deg\hat x:=\deg x$ and assign a degree 
$deg(t)$ to the central element $t$, and  obtain
an almost-grading.

This is possible if and only if our defining cocycle $\psi$ is 
``local'' in the following sense (the name was introduced 
in the two point case by
Krichever and Novikov in Ref.~\cite{KNFa}). There exists 
$M_1,M_2\in\Z$ such that
\begin{equation}
\forall n,m:\quad\psi(W_n,W_m)\ne 0\ \implies\  
M_1\le n+m\le M_2.
\end{equation}
Here $W$ stands for any  of our algebras (including the supercase
discussed below).
Very important, ``local'' is defined in terms of the
grading, and the grading itself depends on the splitting
$A=I\cup O$. Hence what is ``local''  depends on the splitting too.

We will call a cocycle {\it bounded} (from above) if there exists
$M\in\Z$ such that
\begin{equation}
\forall n,m:\quad\psi(W_n,W_m)\ne 0\ \implies\  
n+m\le M.
\end{equation}
Similarly bounded from below can be defined. Locality means bounded
from above and below.

Given a cocycle class we call it bounded (resp. local) if and only if
it contains a representing cocycle which is bounded (resp. local).
Not all cocycles in a bounded class have to be bounded.
If we choose as integration path a separating cocycle $C_S$,
or one of the $C_i$  then
the above introduced geometric cocycles \refE{vecg} are local,
resp. bounded.
Recall that in this case integration can  be done by calculating
residues at the in-points or  at the out-points.
All these cocycles are cohomologically nontrivial.
The following theorem concerns the opposite direction.

\begin{theorem}\label{T:vclass}
\cite{Scocyc} 
Let $\La$ be the Krichever--Novikov vector field algebra.
\newline
(a) The space of bounded 
cohomology classes is $K$-dimensional ($K=\#I$).
A basis is given by setting  the integration path in
\refE{vecg}  to $C_i$, $i=1,\ldots, K$ the little (deformed) circles
around the points $P_i\in I$.
\newline
(b) The space of local cohomology classes is one-dimensional.
A generator is given by integrating \refE{vecg} over a
separating cocycle $C_S$.
\newline
(c) Up to equivalence and rescaling there is only one one-dimensional
central extension of the vector field algebra $\La$ which 
allows  an extension of the almost-grading.
\end{theorem}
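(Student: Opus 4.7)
My plan is to prove (a), (b), (c) in order, with (c) following immediately from (b) via the standard bijection between $\Ho^2(\La, \C)$ and equivalence classes of one-dimensional central extensions, once one recalls that a cohomology class admits a representative compatible with an extension of the almost-grading if and only if it is local. For part (a), I would first verify that each geometric cocycle $\Phi_{C_i, R}$ defined by \refE{vecg} with integration path a small circle $C_i$ around $P_i$ is bounded from above: its value on $(e_{n,p}, e_{m,r})$ is computed as a residue at $P_i$, and the prescribed pole orders \refE{fnorm} for $\lambda = -1$ directly give an upper bound on $n + m$. Linear independence of the classes $[\Phi_{C_i, R}]$ can be seen by evaluating, for each $i$, on a suitable pair $(e_{n,i}, e_{m,i})$ whose leading Laurent data produces a nonzero residue at $P_i$ and a vanishing one at every other $P_j$, so that $\Phi_{C_j, R}$ vanishes on this pair for $j \ne i$.

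The core of part (a) is the reverse inclusion: every bounded cohomology class is a combination of the $[\Phi_{C_i, R}]$. The strategy is to exploit the almost-graded structure. Given a bounded cocycle $\psi$, decompose it into degree-homogeneous pieces $\psi = \sum_{k \le M} \psi_k$, where $\psi_k(e_{n,p}, e_{m,r})$ vanishes unless $n + m = k$. Applying the cocycle identity $d_2 \psi = 0$ to a homogeneous triple and extracting the leading contributions using \refP{leading} gives, at each total degree, a system of recurrence relations on the coefficients of $\psi_k$. Inductively, starting from the top degree and working downward, one constructs linear forms $\phi_k \colon \La \to \C$ such that $\psi_k - d_1 \phi_k$ is supported on a strictly narrower band of degrees; the formal sum $\phi = \sum_k \phi_k$ is well-defined as a functional because each basis element lies in a finite-dimensional homogeneous piece. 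After this cleanup the residual cocycle has values in only finitely many degrees and is matched against $\sum a_i \Phi_{C_i, R}$ by comparing local residue expansions at the in-points $P_1, \ldots, P_K$, producing the desired representative.

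For part (b), a bounded cocycle $\sum a_i \Phi_{C_i, R}$ is additionally bounded from below precisely when integration over $\sum a_i C_i$ can equivalently be carried out over a combination of out-point cycles $C_j^*$, because only then can residue calculus at the $Q_j$ impose a lower bound on $n + m$. The sole homological relation $\sum_i [C_i] = -\sum_j [C_j^*]$ in $\Ho(\Sigma \setminus A, \Z)$ recorded in \refE{cs} forces $a_1 = \cdots = a_K$ up to an overall scalar, so the local classes form a one-dimensional space spanned by $[\Phi_{C_S, R}]$. Part (c) follows: equivalence classes of one-dimensional central extensions correspond to $\Ho^2(\La, \C)$, and locality singles out exactly those extensions to which the almost-grading lifts.

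The main obstacle is the inductive cleanup inside part (a): at each stage, one must solve the cocycle equation up to controlled errors from the almost-graded tails, construct $\phi_k$ correctly so as not to reintroduce contributions at degrees already eliminated, and confirm that the accumulated coboundary $d_1 \phi$ represents a genuine element of $\mathrm{C}^1(\La, \C)$ rather than a mere formal series. Carefully propagating the almost-grading constants $R_1, R_2$ through the Jacobi identity at each step is the technical heart of the argument, and is precisely the content of the detailed calculation carried out in \cite{Scocyc}.
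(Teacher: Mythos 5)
You have correctly identified the proof strategy, and it is essentially the intended one: note that the paper itself gives no proof of \refT{vclass} but quotes it from \cite{Scocyc}, and the method imported from there --- descending induction over the level $n+m$, with cocycle values at a given level pinned down modulo values at higher level and then normalized away by coboundaries --- is exactly what the paper displays in \refS{proof}, where it runs the same induction for the superalgebra. Your part (a) plan (boundedness of the $\Phi_{C_i,R}$ by residue calculus at the $P_i$, linear independence via test pairs localized at a single in-point, and the level-by-level coboundary cleanup, including your correct worry that the accumulated $\phi$ must be a genuine linear form) and your deduction of (c) from (b) via the bijection between $\Ho^2(\La,\C)$ and equivalence classes of central extensions both match that blueprint.

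There is, however, one genuine gap, in part (b). By the paper's definition a class is local iff it \emph{contains} a local representative, so to get one-dimensionality you must show that when the $a_i$ are not all equal, \emph{no} cocycle cohomologous to $\sum_{i} a_i\Phi_{C_i,R}$ is bounded from below. Your homological argument only addresses the particular representative $\sum_i a_i\Phi_{C_i,R}$, and even for that representative the claim that boundedness from below holds ``precisely when'' the integration cycle can be moved to the out-points is an assertion, not a proof: a priori an unbounded coboundary $d_1\phi$ could restore boundedness from below without the cycle being separating. The fix, consistent with your own machinery, is to run the part (a) classification a second time with the roles of $I$ and $O$ interchanged: a local cocycle is then cohomologous both to $\sum_i a_i\Phi_{C_i,R}$ and to $\sum_j b_j\Phi_{C_j^*,R}$, and once one checks that the $N$ classes $[\Phi_{C_i,R}]$, $[\Phi_{C_j^*,R}]$ satisfy in $\Ho^2(\La,\C)$ only the single relation induced by \refE{cs1} (again by evaluating on suitably localized pairs), the intersection of the two spans is exactly the line through $[\Phi_{C_S,R}]$, which forces $a_1=\cdots=a_K$. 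With that supplement your outline agrees with the argument of \cite{Scocyc}.
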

\section{Central extensions - the supercase}
In this section we 
consider central extensions of our Lie superalgebra $\Sa$.
Such a central extension is 
given by a bilinear map 
\begin{equation}
c:\Sa\times\Sa\to \C
\end{equation}
via an expression
completely analogous  to \refE{cext}.
Additional conditions for $c$ follow from the fact that
the resulting extension should be again a superalgebra.
This implies that for the homogeneous elements $x,y,z\in\Sa$
($\Sa$ might be an arbitrary Lie superalgebra) we have
\begin{equation}\label{eq:csymm}
c(x,y)=-(-1)^{\bar x \bar y}c(x,y).
\end{equation}
The bilinear map $c$  will be symmetric if $x$ and $y$ are odd, otherwise
it will be antisymmetric.
The super-cocycle condition reads in complete analogy with 
the super-Jacobi relation as
\begin{equation}\label{eq:scocyc}
(-1)^{\bar x \bar z}c(x,[y,z])+
(-1)^{\bar y \bar x}c(y,[z,x])+
(-1)^{\bar z \bar y}c(z,[x,y])\ = \ 0.
\end{equation}
As we will need it anyway, I will write it out
for the different type of arguments.
For {\it (even,even,even)}, {\it (even,even, odd)}, and 
{\it (odd,odd,odd)}
it will be of the ``usual form'' of the cocycle condition
\begin{equation}\label{eq:scocyn}
c(x,[y,z])+
c(y,[z,x])+
c(z,[x,y])\ = \ 0  .
\end{equation}
For {\it (even,odd,odd)} we obtain
\begin{equation}\label{eq:scocys}
c(x,[y,z])+
c(y,[z,x])-
c(z,[x,y])\ = \ 0 .
\end{equation} 
Now we have to decide which parity our central element should have.
In our context the natural choice is that the central element
should be even, as we want to extend the central extension of
the vector field algebra to the superalgebra. 
This implies that our bilinear form $c$ has to be an even form.
Consequently,  
\begin{equation}
c(x,y)=c(y,x)=0, \quad \text{for \ } \bar x=0,\bar y=1.
\end{equation}
In this case from the super-cocycle conditions 
only \refE{scocys} for the {\it (even,odd,odd)} and 
\refE{scocyn} for the {\it (even,even,even)}
case will give relations which are not nontrivially zero.
In \refS{odd} we will consider the case that the central element is of
odd parity.

Given a linear form $k:\Sa\to\C$ we assign to it
\begin{equation}
\delta_1k(x,y)=k([x,y]).
\end{equation}
As in the classical case 
$\delta_1k$ will be a super-cocycle.
A super-cocycle will be a coboundary 
if and only if there exists a linear form $k:\Sa\to\C$ such that
$c=\delta_1k$. As $k$ is a linear form it can be written as
$k=k_{\bar 0}\oplus k_{\bar 1}$ where 
$k_{\bar 0}:\Sa_{\bar 0}\to \C$ and
$k_{\bar 1}:\Sa_{\bar 1}\to \C$.
Again we have the two cases of the parity of the central element.
Let $c$ be  a coboundary $\delta_1k$. If the central element is even then
$c$  will also be a coboundary of a $k$ with
$k_{\bar 1}=0$. In other words $k$ is even. 
In the odd case we have  $k_{\bar 0}=0$ and $k$ is odd.

After fixing a parity of the central element we consider 
the quotient spaces
\begin{align}
\Ho^2_{\bar 0}(\Sa,\C)&:=\{\text{even cocycles}\}/
\{\text{even coboundaries}\},
\\
\Ho^2_{\bar 1}(\Sa,\C)&:=\{\text{odd cocycles}\}/
\{\text{odd coboundaries}\}.
\end{align}
These cohomology spaces classify central extensions of $\Sa$ with
even (resp. odd) central elements up to equivalence.
Equivalence is  defined as in the non-super setting.

\medskip
For the rest of this section our algebra $\Sa$ will the Lie superalgebra 
introduced in \refS{super}. Moreover, for the moment we concentrate
on the case of an even central element $t$.
Recall our convention to denote vector fields by $e,f,g,...$ and
-1/2-forms by $\varphi,\psi,\chi,..$.
{}From the discussion above we know
\begin{equation}
c(e,\varphi)=0,\quad e\in\La,\ \varphi\in\Sp.
\end{equation}
The super-cocycle conditions for the even elements is just the
cocycle condition for the Lie subalgebra $\La$. The only other
nonvanishing super-cocycle condition is for the 
{\it (even,odd,odd)} elements and reads as
\begin{equation}\label{eq:cyccon}
c(e,[\varphi,\psi])-c(\varphi,e\ldot \psi)-c(\psi,e\ldot \varphi)
=0.
\end{equation}
Here the definition of the product $[e,\psi]:=e\ldot\psi$
was used to rewrite  \refE{scocys}. 

In particular, if we have a cocycle $c$ for the algebra $\Sa$ we obtain
by restriction a cocycle for the algebra $\La$.
For the mixing term we know that $c(e,\psi)=0$.
A naive try to put just anything for $c(\varphi,\psi)$ will not work
as \refE{cyccon} relates the restriction of the cocycle on $\La$ with
its values on $\Sp$.

\medskip
\begin{proposition}
Let $C$ be any closed (differentiable) curve on $\Sigma$ not meeting
the
points in $A$, and let $R$ be any (holomorphic) projective connection,
then the bilinear extension of 
\begin{equation}
\label{eq:sacoc}
\begin{aligned}
\Phi_{C,R}(e,f)&:=\cintl{C} \left(\frac 12(e'''f-ef''')
-R\cdot(e'f-ef')\right)dz
\\
\Phi_{C,R}(\varphi,\psi)
&:=
-\cintl{C} \left(\varphi''\cdot \psi+\varphi\cdot \psi''
-R\cdot\varphi\cdot \psi\right)dz
\\
\Phi_{C,R}(e,\varphi)&:=0
\end{aligned}
\end{equation}
gives a Lie superalgebra cocycle for $\Sa$, hence defines a central
extension of $\Sa$
\end{proposition}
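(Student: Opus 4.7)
The plan is to verify three things: the correct graded symmetry of $\Phi_{C,R}$, the coordinate-independence of the integrands (so that they really are globally defined meromorphic 1-forms and the integrals are well-defined on homology classes in $\Ho_1(\Sigma\setminus A,\Z)$), and finally the super-cocycle condition \refE{scocyc}. The symmetry is immediate from inspection: in the vector-field component $e'''f-ef'''$ and $e'f-ef'$ are antisymmetric in $(e,f)$, while in the half-form component $\varphi''\psi+\varphi\psi''$ and $R\varphi\psi$ are symmetric in $(\varphi,\psi)$; combined with $\Phi_{C,R}(e,\varphi)=0$ this matches \refE{csymm} for an even central element.

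Next I would establish well-definedness of the half-form integrand. Under a coordinate change $z=h(w)$ a half-form $\varphi\,(dz)^{-1/2}$ is represented in the $w$-chart by $\tilde\varphi=\varphi(h(w))\cdot(h'(w))^{1/2}$. A direct differentiation shows that $(\varphi''\psi+\varphi\psi'')\,dz$ fails to be a tensor by a term proportional to the Schwarzian $S(h)$ acting on $\varphi\psi\,dz$; the term $-R\,\varphi\psi\,dz$ absorbs this anomaly precisely because $R$ transforms by \refE{pc}. This is the exact analogue of the classical computation that makes the vector-field integrand in \refE{vecg} globally defined, and it reduces to it under the identification $\varphi\;\sbul\;\psi\in\La$. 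Consequently the integrand is a meromorphic 1-form with poles only at $A$, so $\Phi_{C,R}$ depends only on the homology class $[C]\in\Ho_1(\Sigma\setminus A,\Z)$.

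For the cocycle identity itself, the (even, even, even) case is exactly the classical statement that \refE{vecg} defines a cocycle on $\La$, which is already known from \cite{SDiss}. The $(e,\varphi)$ triples give $0=0$ because $\Phi_{C,R}$ vanishes on mixed arguments, so the only nontrivial relation is the (even, odd, odd) version \refE{cyccon}:
\begin{equation*}
\Phi_{C,R}(e,\varphi\;\sbul\;\psi)=\Phi_{C,R}(\varphi, e\ldot\psi)+\Phi_{C,R}(\psi, e\ldot\varphi).
\end{equation*}
I would verify this in a local coordinate chart, using $[\varphi,\psi]=\varphi\psi$ and $e\ldot\psi=e\psi'-\tfrac12 e'\psi$. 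Expanding both sides, repeatedly applying integration by parts on the closed curve $C$ (which has no boundary, so no endpoint terms arise), one finds that all the $R$-independent terms cancel in an algebraic identity of polynomial differential operators, and separately all terms involving $R$ cancel because $R$ enters linearly and symmetrically in the same pattern as in the vector-field cocycle. Since the integrand is a globally defined 1-form by the previous step, this local verification extends globally.

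The only genuine obstacle is the bookkeeping in the $(e,\varphi,\psi)$ computation: several terms with coefficients $1/2$ coming from the half-weight of $\psi$ and $\varphi$ in the Lie derivative \refE{sadef} must be collected correctly, and one has to keep track of which derivatives can be shifted via integration by parts. No genuinely new idea beyond the classical super-Virasoro identity is needed, and indeed for $\Sigma=\Pro^1(\C)$ with $R=0$ the formulas \refE{sacoc} reduce to the standard Neveu--Schwarz cocycle, for which the identity is classical (cf.\ \cite{Bryant}); the present statement is the geometric globalisation, and the projective connection $R$ is exactly the correction term that makes this globalisation possible.
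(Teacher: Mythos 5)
Your proposal follows essentially the same route as the paper's own proof: graded symmetry by inspection, well-definedness of the half-form integrand via the Schwarzian transformation law of $R$, reduction of the super-Jacobi condition to the \emph{(even,odd,odd)} relation, the observation that the $R$-terms cancel identically pointwise, and a local computation showing the remaining integrand is a total derivative, hence integrates to zero over any closed curve. The one point you should make explicit (and which the paper settles by exhibiting the explicit primitive, namely $B\,dz$ with $B=\bigl(e''(\varphi\psi)-2e\varphi'\psi'\bigr)'$) is that discarding integration-by-parts terms over a homologically nontrivial curve $C$ requires the discarded differential to be \emph{globally} exact, i.e.\ to have a single-valued meromorphic primitive on $\Sigma$ --- local exactness of a globally defined $1$-form is not enough, as $dz/z$ around a puncture shows.
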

A similar formula was given  by
Bryant in \cite{Bryant}. By adding the projective connection in
the second part of 
\refE{sacoc} he corrected some formula appearing in 
\cite{BMRR}. He only considered the two-point case and only the
integration over a separating cycle.
See also \cite{Kreusch} for the multi-point case, where  still only
the integration over a separating cycle is considered.
\begin{proof}
First one has to show that the integrands are 
well-defined differentials. It is exactly this point for which 
$R$ had to be introduced. This was done for the vector field case in
\cite{SDiss}, \cite{SLc}. The case for the second part of \refE{sacoc} 
is completely 
analogous, and follows from straight-forward calculations.

 Next the super-Jacobi identities have to be verified.
Again the first one \refE{scocyn} was shown in the latter references.
For the other one \refE{cyccon} we write out the three integrands 
and sum them up (before we
integrate over $C$). By direct calculations we obtain that the term 
coming with the projective connection will identically vanish. Hence
the rest will be a well-defined meromorphic differential, It will not 
necessarily vanish
identically. We only claim that the sum will vanish after integration
over an arbitrary  closed curve.  Recall that the curve integral over an
exact meromorphic differential $\omega$, i.e. a differential
 which is exact, i.e. can be written
locally as $\omega=df$ with $f$ a
meromorphic function on $\Sigma$, will vanish.
In a first step we calculate 
\begin{align*}
e'''f&=\frac 12(e'''f-ef''')+1/2((ef)''-3(e'f'))'  
\\
-2(\varphi'\psi')&=(\varphi''\psi+\varphi\psi'')
-\left((\varphi\psi)'\right)'.
\end{align*}
Hence, we can replace the corresponding integrands
in the cocycle expressions by  integrands given by 
the left hand side.
The total integrand of \refE{cyccon} can 
now be written as
$Bdz$ with 
\begin{equation}
B: =e'''(\varphi\psi)-2\varphi'(e\psi'-1/2e'\psi)'
-2\psi'(e\varphi'-1/2e'\varphi)' 
\end{equation}
which calculates to 
\begin{equation}
B=(e''(\varphi\psi)-2(e\varphi'\psi'))'.
\end{equation}
Consequently $Bdz$ integrated over any closed curve $C$ will
vanish. This shows that the cocycle condition
\refE{cyccon} is true.
\end{proof}

How will the central extension depend on $C$ and $R$?
Obviously,
two cycles lying in the same homology class class of $\Sigma\setminus
A$ will define the same cocycle.  

\begin{proposition}
If $R$ and $R'$ are two  projective connections then
$\Phi_{C,R}$ and $\Phi_{C,R'}$ are cohomologous.
Hence the cohomology class will not depend on the choice of $R$.
\end{proposition}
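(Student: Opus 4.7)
The plan is to identify $\Theta := R - R'$ as a global holomorphic quadratic differential and then exhibit $\Phi_{C,R} - \Phi_{C,R'}$ as the coboundary of an explicit even linear form on $\Sa$. As noted just after \refE{pc}, two projective connections differ by a quadratic differential, so $\Theta\in \Hos{\K^2}$.

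First I would subtract the three parts of \refE{sacoc} term by term. The higher-derivative pieces $\tfrac12(e'''f-ef''')$ and $\varphi''\psi+\varphi\psi''$ contain no $R$ and cancel. What remains is
\begin{align*}
(\Phi_{C,R}-\Phi_{C,R'})(e,f) &= -\cintl{C}\Theta\cdot(e'f-ef')\,dz,\\
(\Phi_{C,R}-\Phi_{C,R'})(\varphi,\psi) &= \cintl{C}\Theta\cdot\varphi\cdot\psi\,dz,\\
(\Phi_{C,R}-\Phi_{C,R'})(e,\varphi) &= 0.
\end{align*}
The key recognition is that $ef'-e'f$ is the local representative of $[e,f]\in\La$ and $\varphi\cdot\psi$ is the local representative of $[\varphi,\psi]=\varphi\sbul\psi\in\La$; thus both non-trivial lines have the common intrinsic shape $\cintl{C}\Theta\cdot[x,y]\,dz$. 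Since $\Theta$ has weight $2$ and a vector field has weight $-1$, the product $\Theta\cdot h$ is a meromorphic $1$-form on $\Sigma$, holomorphic off $A$, so the integrals are well defined.

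This motivates defining the even linear form $k\colon\Sa\to\C$ by
\begin{equation*}
k(e) := \cintl{C}\Theta\cdot e\,dz,\qquad k(\varphi) := 0,\qquad e\in\La,\ \varphi\in\Sp,
\end{equation*}
and extending by linearity. With this choice, $\delta_1 k(e,f)=k([e,f])$ reproduces the first line, $\delta_1 k(\varphi,\psi)=k(\varphi\sbul\psi)$ reproduces the second, and $\delta_1 k(e,\varphi)=k(e\ldot\varphi)=0$ matches the third because $e\ldot\varphi\in\Sp$. Hence $\Phi_{C,R}-\Phi_{C,R'}=\delta_1 k$ and the two cocycles are cohomologous.

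I do not expect any genuine obstacle; the calculation is essentially term-by-term. The one consistency check worth noting is the super-symmetry on odd pairs: a super-cocycle with even central element must be symmetric in two odd arguments, and this is automatic here because $\varphi\sbul\psi$ is commutative, so $k([\varphi,\psi])$ is symmetric in $\varphi,\psi$.
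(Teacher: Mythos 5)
Your proposal is correct and coincides with the paper's own proof: the paper likewise notes $\Omega=R'-R$ is a quadratic differential, rewrites the differences as $\cintl{C}\Omega\cdot[e,f]$ and $\cintl{C}\Omega\cdot[\varphi,\psi]$, and defines the linear form $\kappa_C(e)=\cintl{C}\Omega\cdot e$ extended by zero on $\Sp$, so that the difference equals $\delta_1\kappa_C$. The only deviations are your opposite sign convention $\Theta=R-R'$ (harmless) and your explicit verification of the mixed term $\delta_1 k(e,\varphi)=k(e\ldot\varphi)=0$, which the paper leaves implicit.
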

\begin{proof}
The difference of two projective connections is a quadratic
differential, $\Omega=R'-R$. 
We calculate
\begin{equation}\label{eq:proj}
\begin{gathered}
\Phi_{C,R'}(e,f)-\Phi_{C,R}(e,f)=\cintl{C} \Omega\cdot(ef'-e'f)dz
=\cintl{C} \Omega\cdot [e,f],
\\
\Phi_{C,R'}(\varphi,\psi)-\Phi_{C,R}(\varphi,\psi)
=\cintl{C} \Omega\cdot(\varphi\cdot\psi)dz
=\cintl{C} \Omega\cdot [\varphi,\psi].
\end{gathered}
\end{equation}
If we fix the quadratic differential
$\Omega$ then the map
\begin{equation}
\kappa_C:\La\to\C,\qquad  e\mapsto \cintl{C} \Omega\cdot e
\end{equation}
is a linear map. 
We extend this map by zero on $\Sp$ and obtain using
\refE{proj} that
\begin{equation} 
\Phi_{C,R'}-\Phi_{C,R}=\delta_1\kappa_C.
\end{equation}
Hence both cocycles are cohomologous.
\end{proof}

As in the pure vector field case 
for the non-classical situation,
there will be many inequivalent
central extensions given by different
cycle classes as integration paths. 
Recall that classical means $g=0$ and $N=2$.

We will need
 the special integration paths $C_i$, ($C_j^*$), the circles around the
points
$P_i\in I$ ($Q_j\in O$), 
introduced in \refS{almgrad} and $C_S$ a separating 
cycle. 
Recall from \refE{cs}
\begin{equation}\label{eq:cs1}
[C_S]=\sum_{i=1}^K[C_i]=-\sum_{j=1}^M [C^*_j],
\end{equation}
as  homology classes.
\begin{proposition}\label{P:bounded}
(a) The cocycles $\Phi_{C_i,R}$ are bounded (from above) by zero.

(b) The cocycle $\Phi_{C_S,R}$ obtained by integrating over a separating
cocycle is  a local cocycle.
\end{proposition}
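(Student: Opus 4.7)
The plan is to localise the integrals in \refE{sacoc}: because $C_i$ is a small circle enclosing the single point $P_i\in A$, the residue theorem identifies $\Phi_{C_i,R}(x,y)$ with a constant multiple of $\res_{P_i}$ of the integrand, and analogously for $C^*_j$ at $Q_j$. I would then estimate the vanishing order of the integrand at the relevant point using the explicit orders of the basis elements at $P_i$ furnished by \refE{fnorm}.

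For part (a), by bilinearity and $\Phi_{C_i,R}(e,\varphi)\equiv 0$ it suffices to treat the pure pairs $(e_{n,p},e_{m,r})$ and $(\varphi_{n,p},\varphi_{m,r})$. In a coordinate $z_i$ at $P_i$, the coefficient function of $e_{n,p}=f_{n,p}^{-1}$ has vanishing order $\ge n+1$ and that of $\varphi_{n,p}=f_{n,p}^{-1/2}$ has vanishing order $\ge n+\tfrac12$, with equality iff $p=i$. A term-by-term inspection of \refE{sacoc} --- using that $R$ is holomorphic at $P_i$ and so contributes strictly higher-order terms --- shows that the vector-field integrand has order at $P_i$ at least $(n+1)-3+(m+1)=n+m-1$, and the spinor integrand has order at least $(n+\tfrac12)-2+(m+\tfrac12)=n+m-1$. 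Whenever $n+m>0$ the integrand is therefore holomorphic at $P_i$, its residue is zero, and $\Phi_{C_i,R}(x,y)=0$. This yields the stated bound $M=0$.

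For part (b), I invoke the homological identity \refE{cs1}, $[C_S]=\sum_i[C_i]=-\sum_j[C^*_j]$ in $\Ho^{}_1(\Sigma\setminus A,\Z)$. The first equality together with (a) gives $\Phi_{C_S,R}=\sum_i\Phi_{C_i,R}$ and hence boundedness from above by $0$. For the lower bound I would rerun the same residue-order argument at each $Q_j$, using the second equality $\Phi_{C_S,R}=-\sum_j\Phi_{C^*_j,R}$. The essential input is that the pole order of $f^\lambda_{n,p}$ at any out-point grows at most linearly in $n$ --- exactly the content already underlying the finite bounds $R_1,R_2$ in \refP{coeff}. It follows that the order of the integrand at $Q_j$ is an affine function of $n+m$, so its residue vanishes once $n+m$ is sufficiently negative, yielding the desired lower bound and hence locality.

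The most delicate step is therefore the lower bound in (b): at the out-points one does not have a normalisation as explicit as \refE{fnorm}, and one must instead invoke the linear-in-$n$ pole growth at the $Q_j$ that powers the proof of \refP{coeff} and \refT{almgrad}. Everything else reduces to direct order bookkeeping; the projective connection $R$ plays no part in the residue estimates (being holomorphic on all of $\Sigma$) and appears only in its supporting role of assembling the coordinate expressions in \refE{sacoc} into a globally well-defined meromorphic $1$-form.
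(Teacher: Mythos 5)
Your proposal is correct and follows essentially the same route as the paper: residue/order bookkeeping at the points $P_i$ (using the prescribed orders of the basis elements, with the holomorphic $R$ contributing only higher-order terms) gives boundedness from above, and the homology identity \refE{cs1} combined with the linear growth of the out-point orders gives the lower bound for $\Phi_{C_S,R}$. The only difference is cosmetic: the paper delegates the vector-field part to \cite{SLc}, \cite{Scocyc} and sketches the spinor computation as the ``principle idea,'' whereas you carry out the order estimates explicitly for both parts.
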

\begin{proof} 
First, the cocycles evaluated for the  vector field subalgebra are
bounded  resp. local, as  shown in \cite{SLc}, 
\cite[Thm. 4.2]{Scocyc}.
The same argument works for the other part of the cocycle.
Just to give the principle idea:
We consider the integrand for  pairs of elements
$(\varphi_{m,p},\psi_{n,r})$. If $m+n>0$ it will not have 
residues at the points $P_i$. Hence the integration around $C_i$ will
yield 0. This shows (a) and the fact that $\Phi_{C_S,R}$ is bounded from
above by zero. Integration over $C_S$ can alternatively be done also
by summation of integration over the right hand side of expression
\refE{cs1}. By the definition of the homogeneous elements 
there is a bound $S$ independent of $n,m$ such that 
the integrand for pairs of elements for which the sum
of their degrees $<S$, do not 
have poles at the points $Q_j\in O$. Hence the integral will vanish
too.
This says the cocycle $\Phi_{C_S,R}$ is bounded from below,
hence local.
\end{proof}

\medskip
The question is, will the opposite be also true, meaning that
every local or every bounded cocycle will be equivalent to
those cocycles defined above, resp. to a certain linear combination 
of them?
As in the vector field algebra case, it will turn out
that with respect to a fixed almost-grading the
non-trivial almost-graded central extension (with even central element) 
will be essentially unique, hence given by
the a scalar times $\Phi_{C_S,R}$. 
Also we will make a corresponding statement
about bounded cocycles.

The following is the crucial technical result
\begin{proposition}\label{P:vanish}
Let $c$ be a cocycle for the superalgebra $\Sa$ which is bounded 
from above, and which vanishes on the vector field subalgebra $\La$,
then $c$ vanishes in total. 
In other words, every bounded cocycle is uniquely given by its restriction to
the vector field subalgebra.
\end{proposition}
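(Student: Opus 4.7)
The plan is as follows. Since $c$ is even, $c(e,\varphi)=0$ for all $e\in\La$, $\varphi\in\Sp$, and by hypothesis $c|_{\La\times\La}=0$ as well, so only the bilinear form $c|_{\Sp\times\Sp}$ remains to be killed. The cocycle condition \refE{cyccon} of type (even, odd, odd) then reduces to
$$c(\varphi,e\cdot\psi)+c(\psi,e\cdot\varphi)\;=\;0.$$
Writing $C^{(k,s),(m,r)}:=c(\varphi_{k,s},\varphi_{m,r})$ and substituting $e=e_{n,p}$, $\varphi=\varphi_{k,s}$, $\psi=\varphi_{m,r}$, the leading-term formula \refP{leading} (with remainders governed by \refP{coeff}) produces the master identity
$$(m-\tfrac{n}{2})\,\delta_p^r\, C^{(k,s),(n+m,r)} \;+\; (k-\tfrac{n}{2})\,\delta_p^s\, C^{(m,r),(n+k,s)} \;+\; \Delta \;=\; 0,$$
where $\Delta$ is a $\C$-linear combination of values $C^{(\cdot,\cdot),(\cdot,\cdot)}$ whose two indices sum to at least $k+m+n+1$. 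Since $c$ is bounded from above, there is an $M$ with $C^{(\cdot,\cdot),(\cdot,\cdot)}=0$ whenever the indices sum to more than $M$, and I plan a downward induction on the total degree $N$: assuming the vanishing for sum $>N$, I derive it for sum $=N$.

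For the induction step, fix $a,b\in\Z+1/2$ with $a+b=N$, indices $s,r$, and set $k=a$, $m=b-n$, $p=r$. Then every contribution to $\Delta$ has total degree $\ge N+1$ and so vanishes by induction, leaving
$$(b-\tfrac{3n}{2})\,C^{(a,s),(b,r)}\;+\;(a-\tfrac{n}{2})\,\delta_r^s\, C^{(b-n,r),(n+a,s)}\;=\;0.$$
If $s\ne r$, the second term disappears, and choosing any integer $n$ with $3n/2\ne b$ (at most one integer can fail this for fixed half-integer $b$) forces $C^{(a,s),(b,r)}=0$. If $s=r$, the decisive move is $n=2a$, which is an odd integer since $a\in\Z+1/2$; this kills the coefficient $a-n/2$ and yields $(b-3a)\,C^{(a,r),(b,r)}=0$, so the desired vanishing holds except possibly for the single exceptional pair with $b=3a$.

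That exceptional pair (for which necessarily $N=4a\ne 0$ since $a\ne 0$) is then handled by evaluating the same identity at $n=2b=6a$: the result is the proportionality $-6a\,C^{(a,r),(3a,r)}\;=\;2a\,C^{(-3a,r),(7a,r)}$. The new pair $(-3a,7a)$ has the same total degree $N$, but now $7a-3(-3a)=16a\ne 0$, so the $n=2a'$ trick applied once more immediately gives $C^{(-3a,r),(7a,r)}=0$, whence $C^{(a,r),(3a,r)}=0$. This closes the induction and drives the upper bound on the support of $c$ down without end, forcing $c\equiv 0$ on all of $\Sp\times\Sp$. The only delicate point I foresee is the bookkeeping of parities and integrality (that each chosen $n$ is an integer and each shifted index stays in $\Z+1/2$); past these routine checks, the whole proof is a clean recursion powered by \refP{leading}.
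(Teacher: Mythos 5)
Your proposal is correct and takes essentially the same route as the paper's own proof: the same reduction to the single remaining relation $c(\varphi,e\ldot\psi)+c(\psi,e\ldot\varphi)=0$ on $\Sp\times\Sp$, the same evaluation on homogeneous basis elements via the leading terms of \refP{leading}, and the same descending induction over the level powered by boundedness and the universality of the higher-degree coefficients. The only (harmless) divergence is in the specializations: the paper sets the vector-field degree to zero and uses the symmetry of $c$ on odd elements to get the coefficient $n+m$ (treating level zero separately via $k=-2n$), whereas you keep the vector-field degree $n$ general, kill the second term with $n=2a$, and dispose of the exceptional diagonal pairs $b=3a$ by a second substitution $n=6a$ within the same level — both arguments close the induction, yours even handling level zero uniformly.
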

Before we prove this proposition in \refS{proof} 
we formulate the main theorem of this article.
\begin{theorem}\label{T:main}
Given the Lie superalgebra $\Sa$ 
of Krichever-Novikov type with
its induced almost-grading given by the splitting of $A$ into
$I$ and $O$. Then:

(a) The space of bounded cohomology classes  
 has dimension $K=\#I$. A basis is given by 
the classes of cocycles 
\refE{sacoc} 
integrating over the cycles $C_i$, $i=1,\ldots, K$.

(b) The space of local cohomology classes is one-dimensional.
A generator is given by the class of \refE{sacoc}
integrating over a  separating cocycle $C_S$.

(c) Up to equivalence and rescaling there is only one non-trivial
almost-graded central extension of the Lie superalgebra
extending the almost-graded structure on $\Sa$.
\end{theorem}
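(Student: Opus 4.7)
The plan is to reduce the classification of bounded (respectively local) cocycles on $\Sa$ to the already-established classification on the vector field subalgebra $\La$ given by \refT{vclass}, using \refP{vanish} as the rigidity mechanism that lets information travel from $\La$ back to all of $\Sa$.

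For part (a), I would start with an arbitrary bounded cocycle $c$ on $\Sa$ with even central element. The restriction $c|_{\La\times\La}$ is a bounded cocycle on $\La$, so by \refT{vclass}(a) there exist scalars $\alpha_1,\ldots,\alpha_K\in\C$ and a linear form $\phi:\La\to\C$ with
\[
c|_{\La\times\La} \;=\; \sum_{i=1}^K \alpha_i\,\Phi_{C_i,R}\big|_{\La\times\La} \;+\; \delta_1\phi .
\]
I would extend $\phi$ to $\Sa$ by setting $\phi|_{\Sp}=0$; since the central element is even, this extension is even, so $\delta_1\phi$ is an even super-coboundary. The cocycle
\[
c' \;:=\; c \;-\; \sum_{i=1}^K \alpha_i\,\Phi_{C_i,R} \;-\; \delta_1\phi
\]
is still bounded (each $\Phi_{C_i,R}$ is bounded by \refP{bounded}(a), and $\delta_1\phi$ is bounded from above because $[\Sp,\Sp]\subseteq\La$ is controlled by the almost-grading of \refT{almgrad}, so $\phi$ only sees a bounded range of $\La$-degrees), and it vanishes on $\La\times\La$ by construction. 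Invoking \refP{vanish} forces $c'\equiv 0$, so $c$ is cohomologous to $\sum_i\alpha_i\Phi_{C_i,R}$. Linear independence of the $K$ classes $[\Phi_{C_i,R}]$ on $\Sa$ follows by restriction: a nontrivial relation on $\Sa$ would descend to a nontrivial relation on $\La$, contradicting \refT{vclass}(a).

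For part (b), I would repeat the argument with \emph{bounded} replaced by \emph{local} throughout. Locality of $c$ forces locality of $c|_{\La\times\La}$, which by \refT{vclass}(b) pins down the linear combination to a multiple of the class of $\Phi_{C_S,R}|_{\La\times\La}$; equivalently, using the homological identity \refE{cs1}, the only combinations $\sum\alpha_i\Phi_{C_i,R}$ that are local are the scalar multiples of $\Phi_{C_S,R}$. Part (c) is then essentially a reformulation of (b): local cocycles are precisely those whose associated central extension admits an extension of the almost-grading (by assigning a degree to the central element $t$), so (b) gives the desired uniqueness up to rescaling and equivalence.

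The main obstacle in this scheme is \refP{vanish}, which does all the heavy lifting; everything else is bookkeeping. One subtle point to check carefully is that the coboundary correction $\delta_1\phi$ obtained from \refT{vclass} and extended by zero on $\Sp$ is genuinely bounded from above on $\Sa$ — this is what lets $c'$ remain in the bounded class so that \refP{vanish} applies — but this follows from the almost-gradedness of the bracket $[\Sp,\Sp]\subseteq\La$. One should also verify that locality is preserved under the same extension-by-zero trick for part (b), which is immediate for the same reason.
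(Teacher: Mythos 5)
Your proposal is correct and follows essentially the same route as the paper's proof: restrict to $\La$, invoke \refT{vclass}, subtract a coboundary $\delta_1\phi$ with $\phi$ extended by zero on $\Sp$, extend the geometric cocycle \refE{sacoc} to all of $\Sa$, apply \refP{vanish} to the difference, and get linear independence by restriction, with (b) and (c) exactly as in the paper. One caveat: your justification that $\delta_1\phi$ is bounded from above is incomplete as stated --- almost-gradedness of $[\Sp,\Sp]\subseteq\La$ alone does not suffice, since $\phi$ could a priori be nonzero on basis elements of $\La$ of arbitrarily high degree; one needs that the coboundary in \refT{vclass} can be chosen with $\phi$ vanishing above some degree (which follows from the level-by-level construction in \cite{Scocyc}) --- a point the paper itself also passes over silently.
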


\begin{proof}
Let $c$ be a bounded cocycle for $\Sa$. After restriction to
$\La\times\La$ we obtain a bounded cocycle for $\La$. By
\refT{vclass} it is cohomologous to 
a standard cocycle with a suitable projective connection $R$
\begin{equation}
\Phi:=\sum_{i=1}^K a_i\Phi_{C_i,R},\quad a_i\in\C.
\end{equation}
Let $\kappa:\La\to\C$ be the linear form giving
the coboundary, i.e. $c_{_|\La}-\Phi=\delta_1(\kappa)$.
We extend $\kappa$ by zero for the elements of
$\Fl[-1/2]$. 
Then $\Phi':=c-\delta_1(\kappa)$ is cohomologous to
$c$ (as cocycle for $\Sa$). Moreover,
$(c-\delta_1(\kappa))_{|\La}=\Phi$.
Next we extend $\Phi$ via \refE{sacoc} to $\Sa$ (i.e. taking
the same linear combination of integration cycles and the same projective 
connection) and obtain another cocycle for $\Sa$, still denoted by
$\Phi$. 
By construction the difference $\Phi'-\Phi$ is zero on $\La$. Hence by 
\refP{vanish} it vanishes on $\Sa$. But $\Phi$ is exactly
of the form claimed in (a). We get exactly $K$ linearly
independent cocycle classes, as they are linearly independent as
cocycles for the subalgebra $\La$.
Claim (b) follows in a completely analogous manner, now applied
to local cocycles. Claim (c) is a direct consequence.
\end{proof}

I like to stress the fact, that it will not be the case that there is
only one central extension of the superalgebra $\Sa$. 
Only if we fix an almost-grading for $\Sa$, which means that we
fix a splitting
of $A$ into $I$ and $O$, there will be a unique central extension
allowing us to extend the almost-grading.
For another essentially different splitting (meaning that it
is not only the changing of the role of $I$ and $O$) 
splitting the almost-grading
will be different and we will obtain a different central extension of
the algebra.
In fact, if the genus $g$ of the Riemann surface is larger than zero,
there will be non-equivalent central extensions which are not
associated to any almost-grading (neither coming from a local
cocycle, nor from a bounded cocycle).

\subsection{Proof of \refP{vanish}}\label{sec:proof}
We start with a bounded cocycle $c$ for $\Sa$ which vanishes on the subalgebra 
$\La$ and consider 
the cocycle condition \refE{cyccon}.
It remains
\begin{equation}\label{eq:con}
c(\varphi,e\ldot \psi)+c(\psi,e\ldot \varphi)
=0, \quad \forall e\in\La,\varphi,\psi\in\Sp.
\end{equation}
Our goal is to show that it is identically zero.

For a pair of homogeneous basis elements $(f_{m,p},g_{n,r})$
of any combination of types we call $l=n+m$ the {\it level} of the
pair. We evaluate the cocycle $c$ at pairs of level $l$, i.e.
$c(f_{m,p},g_{n-l,r})$. We call these cocycle values, values of level
$l$.
We apply the technique  developed in
\cite{Scocyc}. We  will consider  cocycle values
$c(f_{m,p},g_{n,r})$ on pairs of level $l=n+m$ and will make
descending
induction over the level. By the boundedness from above, the
cocycle values will vanish at all pairs of sufficiently high
level. It will turn out that everything will be fixed by the
values of the cocycle at level zero. Finally, we will show
that the cocycle  $c$ also vanishes at level zero.
Hence the claim of \refP{vanish}.

For a cocycle $c$ evaluated for pairs of elements of level $l$
we will use the symbol $\equiv$ to denote that the expressions are
the same on both sides of an equation involving cocycle values
up to values of $c$  at
higher level. This has to be understood in the following strong
sense:
\begin{equation}
\sum \alpha^{(n,p,r)}c(f_{n,p},g_{l-n,r})\equiv 0,\qquad
 \alpha^{(n,p,r)}\in\C
\end{equation}
denotes a congruence modulo a linear combination of values of $c$
at pairs of basis elements of level $l'>l$. The coefficients of
that linear combination, as well as the  $\alpha^{(n,p,r)}$,
depend only on the structure of the  Lie algebra $\Sa$ and do not
depend on $c$.
We will also use the same symbol $\equiv$ for equalities in  $\Sa$
which are true modulo terms of higher degree compared to the terms
under consideration.

We consider the triple of basis elements $(\varphi_{m,r},\varphi_{n,s},e_{k,p})$
for level $l=n+m+k$.
Recall \refE{coeffalm}
\begin{equation}
[e_{k,p},\varphi_{n,s}]\equiv(n-\frac k2)\,\delta_p^s\,\varphi_{n+k,p}.
\end{equation}
Hence
\begin{equation}
c(\varphi_{m,r},[e_{k,p},\varphi_{n,s}])\equiv
c(\varphi_{m,r},(n-\frac k2)\,\delta_p^s\,\varphi_{n+k,p})
=(n-\frac k2)\,\delta_p^s\,c(\varphi_{m,r},\varphi_{n+k,p}).
\end{equation}
If we use this we obtain from \refE{con}
\begin{equation}\label{eq:con1}
(n-\frac k2)\,\delta_p^s \,c(\varphi_{m,r},\varphi_{n+k,p})+
(m-\frac k2)\,\delta_p^r\, c(\varphi_{n,s},\varphi_{m+k,p})\equiv 0.
\end{equation}
We set $k=0$ then  (now the level is $n+m$)
\begin{equation}\label{eq:con2}
n\,\delta_p^s\, c(\varphi_{m,r},\varphi_{n,p})+
m\,\delta_p^r\, c(\varphi_{n,s},\varphi_{m,p})\equiv 0.
\end{equation}
If $p=s$ but $p\ne r$ then we obtain 
\begin{equation}
n\cdot c(\varphi_{m,r},\varphi_{n,p})
\equiv 0.
\end{equation}
As $n\in\Z+\frac 12$, and hence $n\ne 0$ we have
\begin{equation}\label{eq:con4}
  c(\varphi_{m,r},\varphi_{n,p})
\equiv 0, \quad r\ne p.
\end{equation}
Next we consider $r=p=s$ and \refE{con2} yields
\begin{equation}
n\cdot  c(\varphi_{m,s},\varphi_{n,s})+
m\cdot  c(\varphi_{n,s},\varphi_{m,s})\equiv 0.
\end{equation}
As $c$ is symmetric on $\Sp$ we get
\begin{equation}\label{eq:con5}
(n+m)\cdot  c(\varphi_{m,s},\varphi_{n,s})\equiv 0.
\end{equation}
This shows that, as long as the level is different
from zero, the cocycle is given via universal cocycle
values of higher level.
By assumption our cocycle $c$ is bounded from above. Hence there
exists a level $R$ such that for all levels $>R$ the cocycle
values will vanish. We get by induction from 
\refE{con4} and \refE{con5} that the cocycle values will be zero
for all levels $>0$. Next we will show that it will vanish also
at level zero.
We go back to \refE{con1} (for $s=p=r$) and plug in $n=m$ and
$k=-2n\in\Z$ and obtain
\begin{equation}
4n\cdot c(\varphi_{n,s},\varphi_{-n,s})
\equiv 0, \quad \forall n\in \Z+\frac 12.
\end{equation}
Hence also at level zero everything will be expressed by cocycle 
values of higher level and consequently will be equal to zero.
Continuing with \refE{con5} we see that also at level $<0$ we get
that the cocycle will vanish. Hence the claim
\qed.
\subsection{The case of an odd central element}
\label{sec:odd}
$ $

In this section we will consider the case where the central element
has odd parity. 
We will show
\begin{theorem}\label{T:odd}
Every bounded cocycle yielding a central extension with odd
central element is a coboundary.
\end{theorem}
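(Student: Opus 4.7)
Because the central element is odd, the symmetry relation \refE{csymm} forces $c$ to vanish on $\La\times\La$ and on $\Sp\times\Sp$; the only potentially nonzero values are the mixed ones $c(e,\varphi)$, and these are antisymmetric under the swap. Going through \refE{scocyc} for all possible triples of parities, the only non-trivially satisfied super-cocycle relations are the (even,even,odd) one,
\begin{equation}\label{eq:oocon1}
c([e,f],\psi)=c(e,f\ldot\psi)-c(f,e\ldot\psi),\qquad e,f\in\La,\ \psi\in\Sp,
\end{equation}
and the (odd,odd,odd) one, $c(\varphi\sbul\psi,\chi)+c(\psi\sbul\chi,\varphi)+c(\chi\sbul\varphi,\psi)=0$ for $\varphi,\psi,\chi\in\Sp$. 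A coboundary for an odd linear form $k\colon\Sp\to\C$ takes the shape $\delta_1 k(e,\varphi)=k(e\ldot\varphi)$, and every such coboundary fulfils both conditions automatically; the second is a consequence of the super-Jacobi relation applied to three odd elements.

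My plan is to construct $k$ by descending induction on the almost-grading, in direct analogy with \refS{proof}. First I would substitute the standard basis triples $(e_{0,p},e_{n,p},\varphi_{m,r})$ into \refE{oocon1} and expand using the leading-order formulas of \refP{leading}. A short computation, working modulo cocycle values of strictly higher level, produces on the diagonal $r=p$ the congruence
\begin{equation}
(n+m)\,c(e_{n,p},\varphi_{m,p})\equiv\bigl(m-\tfrac{n}{2}\bigr)\,c(e_{0,p},\varphi_{n+m,p}),
\end{equation}
while applying \refE{oocon1} to a second triple with distinct point labels yields $c(e_{n,v},\varphi_{m,r})\equiv 0$ for $v\ne r$. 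Since $n+m\in\Z+\tfrac12$ is never zero, every level-$l$ cocycle value is pinned down, modulo higher levels, by the single scalar $c(e_{0,p},\varphi_{l,p})$, which suggests the Ansatz
\begin{equation}
k(\varphi_{l,p}):=\frac{1}{l}\,c(e_{0,p},\varphi_{l,p}),\qquad l\in\Z+\tfrac12,\ p=1,\dots,K.
\end{equation}

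By boundedness there is a level $L$ above which $c$ vanishes identically. I would set $k$ to zero above level $L$, define $k$ at level $L$ by the Ansatz, and verify that the remainder $c-\delta_1 k$ still vanishes strictly above level $L$ and now at level $L$ as well; then I iterate downward one level at a time. The principal technical obstacle is the bookkeeping of the h.d.t.\ terms from \refP{leading}: the diagonal leading coefficient uniquely determines $k$ at each new level, but $\delta_1 k$ simultaneously injects h.d.t.\ contributions at lower levels through the full product $e\ldot\varphi$, and one must show these exactly match the h.d.t.\ already appearing in $c$. The rigidity congruences above, applied iteratively level by level, ensure precisely this matching, so that once all levels have been processed the identity $c=\delta_1 k$ holds on every mixed pair; the (odd,odd,odd) relation is then automatic because $\delta_1 k$ satisfies it tautologically.
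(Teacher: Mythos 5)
Your proposal is correct and follows essentially the same route as the paper: parity forces $c$ to live on mixed pairs only, a coboundary is built by descending recursion so that the diagonal values $c(e_{0,p},\varphi_{k,p})$ vanish, and then the {\it (even,even,odd)} relation together with the fact that odd degrees are half-integers (so coefficients like $n+m$ and $k$ never vanish) kills everything by downward induction on the level. Your level-by-level iteration, with the h.d.t.\ bookkeeping you flag, is exactly the paper's recursive definition $\Phi(\varphi_{k,p})=\frac 1k\bigl(c(e_{0,p},\varphi_{k,p})-\Phi(y_{k,p})\bigr)$ written out as a process, so the approaches coincide.
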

Hence,
\begin{corollary}
There are  no non-trivial central extensions of the Lie superalgebra
$\Sa$  with odd central element coming from a bounded cocycle.
In other words all such central extensions will split.
\end{corollary}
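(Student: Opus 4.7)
The plan is to adapt the level-descending induction of \refP{vanish}. Because the central element has odd parity, the cocycle $c$ is itself odd, so $c(e,f)=c(\varphi,\psi)=0$ for pure-parity pairs; only the mixed values $c(e,\varphi)$ with $e\in\La$, $\varphi\in\Sp$ can be nonzero. Parsing the super-cocycle condition by argument parity one sees that (even,even,even), (odd,odd,odd) and (even,odd,odd) triples give only trivial relations, while (even,even,odd) triples yield
\begin{equation}\label{eq:oddcon}
c(e,f\ldot\varphi)-c(f,e\ldot\varphi)-c([e,f],\varphi)=0,\qquad e,f\in\La,\ \varphi\in\Sp.
\end{equation}
An odd coboundary $\delta_1 k$ satisfies $(\delta_1 k)(e,\varphi)=k(e\ldot\varphi)$, so the theorem reduces to producing a linear form $k:\Sp\to\C$ with $c(e,\varphi)=k(e\ldot\varphi)$ throughout.

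I would construct $k$ by descending induction on the degree $L$ of the homogeneous subspaces $\Sp_L$. Boundedness from above furnishes an integer $R$ with $c$ vanishing at level $>R$; set $k\equiv 0$ there. Inductively assume $k$ is known on $\bigoplus_{L'>L}\Sp_{L'}$, so that $\tilde c:=c-\delta_1 k$ vanishes on all pairs of level $>L$. By \refP{leading},
\[
e_{n,p}\ldot\varphi_{L-n,r}=(L-\tfrac{3n}{2})\,\delta_{r}^{p}\,\varphi_{L,r}+(\text{terms in }\Sp_{L'}\text{ with }L'>L),
\]
and since $L\in\Z+\tfrac12$ the coefficient $L-3n/2$ has at most one integer zero. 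Choose $n_0$ with $L-\tfrac{3n_0}{2}\ne 0$ and define
\[
k(\varphi_{L,p}):=\bigl(L-\tfrac{3n_0}{2}\bigr)^{-1}\bigl(c(e_{n_0,p},\varphi_{L-n_0,p})-C_{n_0,p}\bigr),
\]
where $C_{n_0,p}$ gathers the contributions from the already-fixed higher-level values of $k$ in the expansion of $e_{n_0,p}\ldot\varphi_{L-n_0,p}$. By construction $\tilde c(e_{n_0,p},\varphi_{L-n_0,p})=0$.

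The remaining task is to propagate this one vanishing to every pair of level $L$. Since $\tilde c$ is a cocycle that already vanishes at every higher level, \refE{oddcon} applied to $(e_{n,p},e_{m,q},\varphi_{k,s})$ with $n+m+k=L$ becomes the \emph{exact} relation
\[
(k-\tfrac{m}{2})\delta_q^s\tilde c(e_{n,p},\varphi_{m+k,q})-(k-\tfrac{n}{2})\delta_p^s\tilde c(e_{m,q},\varphi_{n+k,p})-(m-n)\delta_p^q\tilde c(e_{n+m,q},\varphi_{k,s})=0.
\]
Specializing $p=q\ne s$ gives $(m-n)\tilde c(e_{n+m,p},\varphi_{k,s})=0$; varying $n\ne m$ with prescribed sum kills every off-diagonal value $\tilde c(e_{N,p},\varphi_{L-N,s})$ with $s\ne p$. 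Specializing $p=q=s$ and writing $X_a:=\tilde c(e_{a,p},\varphi_{L-a,p})$ yields (from e.g. $(n,m,k)=(a,0,L-a)$ when $L\ne 0$ and analogous choices when $L=0$) that all $X_a$ are forced to have the form $X_a=(L-\tfrac{3a}{2})\kappa_p$ for a single scalar $\kappa_p$; since our construction has set $X_{n_0}=0$, we conclude $\kappa_p=0$ and hence $X_a=0$ for every $a$.

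The main obstacle is exactly this collapse of the infinite family of diagonal relations to a one-parameter family that we are free to kill. The key point is the polynomial identity
\[
(k-\tfrac{m}{2})(m+k-\tfrac{n}{2})-(k-\tfrac{n}{2})(n+k-\tfrac{m}{2})-(m-n)(k-\tfrac{n+m}{2})\equiv 0,
\]
verified by direct expansion, which expresses that the ansatz $X_a=(L-3a/2)\kappa_p$ is automatically compatible with every specialization of \refE{oddcon}; the complementary fact, that the relations extracted from the cocycle condition already imply this ansatz, comes from specializations of the linear relation above. Once this compatibility is dispatched, the descending induction closes, $k$ is defined on all of $\Sp$ and satisfies $c=\delta_1 k$, proving \refT{odd} and its corollary.
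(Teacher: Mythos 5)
Your proposal is correct and follows essentially the paper's own route: a cohomologous correction by a recursively defined odd linear form on $\Sp$ (the paper simply always takes your $n_0=0$, normalizing $c'(e_{0,p},\varphi_{m,p})=0$, which works since the coefficient is the degree $L\in\Z+\tfrac12\ne 0$, so no case distinction on $L-\tfrac{3n_0}{2}$ is ever needed), followed by descending induction over the level using the now-exact \emph{(even,even,odd)} relations to kill first the off-diagonal values ($p=q\ne s$, respectively the paper's $s=p\ne r$ with $n=0$) and then the diagonal ones. One harmless slip: \emph{(odd,odd,odd)} triples do yield nontrivial relations, since each term $c(\varphi,[\psi,\chi])$ is a mixed-parity value and $c$ is odd (the paper correctly lists this case as nontrivial), but as neither your argument nor the paper's ever invokes those relations — your reduction to finding $k$ with $c(e,\varphi)=k(e\ldot\varphi)$ is valid because $c$ and $\delta_1 k$ both vanish automatically on pure-parity pairs — nothing breaks.
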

\begin{proof}(\refT{odd})
In the odd case 
only the cocycle relations \refE{scocyn} for the
{\it (even,even,odd)} and {\it (odd,odd,odd)} combinations will 
be non-trivial. 
We first make a cohomologous change by defining 
recursively a map
\begin{equation}
\Phi:\Sp\to\C,
\end{equation}
which will be extended by zero on $\La$.
We consider $c(e_{0,p},\varphi_{k,r})$. It is of level $k$. By the
boundedness there exists an $R$ such that for  
$k>R$  all its values will vanish. 
Recall e.g. \refE{coeffalm}
\begin{equation}
e_{0,p}\ldot \varphi_{k,p}=k\cdot\varphi_{k,p}+y_{k,p},
\end{equation}
with $y_{k,p}$ a finite sum of elements of degree $\ge k+1$.
Set 
\begin{equation}
\Phi(\varphi_{k,p}):=0,\quad k>R,\  p=1,\ldots,K
\end{equation}
and then recursively for $k=R, R-1, .....$
\begin{equation}
\Phi(\varphi_{k,p}):=\frac 1k\left(c(e_{0,p},\varphi_{k,p})
-\Phi(y_{k,p})\right),\quad p=1,\ldots,K.
\end{equation}
For the cohomologous cocycle
$c'=c-\delta_1\Phi$
we calculate for $p=1,\ldots, K$
\begin{equation}
c'(e_{0,p},\varphi_{m,p})=
c(e_{0,p},\varphi_{m,p})-\Phi(e_{0,p}\ldot \varphi_{k,p})
=c(e_{0,p},\varphi_{m,p})-k\Phi(\varphi_{k,p})-\Phi(y_{k,p})
=0.
\end{equation}
{\bf Claim:} The cocycle $c'$ vanishes identically.
For simplicity we will drop the ${}'$.
We consider the  cocycle relation for {\it (even,even,odd)}
for the elements $e_{n,p}, e_{m,r}, \varphi_{k,s}$.
With the same technique used in the last section we obtain
\begin{equation}
(k-\frac m2)c(e_{n,p},\varphi_{k+m,r})\delta_r^s
-
(k-\frac n2)c(e_{m,r},\varphi_{k+n,s})\delta_p^s
-(m-n)
c(e_{m+n,r},\varphi_{k,s})\delta_p^r\equiv 0.
\end{equation}
For $n=0$
this specializes to 
\begin{equation}\label{eq:con6}
(k-\frac m2)c(e_{0,p},\varphi_{k+m,r})\delta_r^s
-k\cdot c(e_{m,r},\varphi_{k,s})\delta_p^s
-m\cdot 
c(e_{m,r},\varphi_{k,s})\delta_p^r\equiv 0.
\end{equation}
If we set $s=p\ne r$ then 
\begin{equation}
-k\cdot c(e_{m,r},\varphi_{k,s})\equiv 0.
\end{equation}
As $k$ is half-integer  we get
$c(e_{m,r},\varphi_{k,s})\equiv 0$.
For  $r=s=p$ in \refE{con6} we obtain 
\begin{equation}
(k-\frac m2)c(e_{0,p},\varphi_{m,p})
-
(k+m)c(e_{m,p},\varphi_{k,p})
\equiv 0.
\end{equation}
As $c(e_{0,p},\varphi_{m,p})=0$ and $k+m\ne 0$ we get
$c(e_{m,p},\varphi_{k,p})\equiv 0$ for all $m$ and $k$.
Hence all values are determined by  values of higher
level. By the boundedness it will be zero at all level.
\end{proof}

\subsection{Some special examples}
In this section I like to give reference to some special
examples.

\subsubsection{Higher genus, two points}
In this case there is only one almost-grading for $\Sa$ 
because there is only  one possible splitting.
In this case the separating cycle $C_S$ coincides with the
 cycle $C_1$. In particular, integration over $C_1$ already gives
a local cocycle. But this does not mean that every bounded 
cocycle will be local, it only means that the class of bounded cocycles 
coincides with the class of local cocycles.
This case with integration over $C_S$ was considered by 
Bryant \cite{Bryant}. But he does not prove uniqueness.
Also it has to be repeated that for higher genus, there are
other non-equivalent cocycles obtained by integration by other
non-trivial cycle classes on $\Sigma$.
See also Zachos \cite{Zac} for $g=1$ and two points.

\subsubsection{Genus $g=0$, two points}\label{sec:classext}
This is the classical situation. 
By some isomorphism of  $\Sigma=S^2$, the Riemann sphere,
we can assume that $I=\{0\}$ and $O=\{\infty\}$ with respect 
to the quasi-global coordinate $z$. As projective connection 
$R=0$ will do.
In this situation our algebras are honestly graded and the
elements can be given like follows
\begin{equation}
e_n=z^{n+1}\frac {d}{dz},\quad n\in\Z,\qquad
\varphi_m=z^{m+1/2}(dz)^{-1/2},\quad m\in\Z+1/2.
\end{equation}
By calculating the cocycle values we obtain the well-known
expressions
\begin{equation}\label{eq:classcent}
\begin{aligned}[]
[e_n,e_m]
&=(m-n)e_{m+n}+\frac 1{12}(n^3-n)\,\delta_n^{-m}\, t,
\\
[e_n,\varphi_m]&=(m-\frac n2)\;\varphi_{m+n},
\\
[\varphi_n,\varphi_m]&=e_{n+m}-\frac 16\,(n^2-\frac 14)\,\delta_n^{-m}\, t.
\end{aligned}
\end{equation}
In fact, 
all higher order terms in the calculations above 
are now exact not only up to higher order.
This means that there is no reference to boundedness needed and
the statements are true for all cocycles.  This is the same as for
the vector field algebra. 
Note also, that the subspace
\begin{equation}
\langle e_{-1}, e_{0},  e_{-1},
\varphi_{-1/2}, \varphi_{1/2}
\rangle
\end{equation}
is a finite-dimensional sub Lie superalgebra. It consists of the global holomorphic
sections of $\Fl[-1]$ and $\Fl[-1/2]$. Restricted to this
subalgebra the cocycle vanishes.

\subsubsection{Genus $g=0$, more than two points}

Here our algebra will not be graded anymore, but only
almost-graded. Different splittings give different
separating cycles and non-equivalent central extensions.
The $N=3$ situation was studied by Kreusch \cite{Kreusch}
and the central extension was calculated
independently of this work.
The case $N=3$ is somehow special. If we fix
the 3 points then we have 3 essentially different splitting into
$I$ and $O$.  
Hence we also have  3 non-equivalent different central
extensions.
If we fix one splitting then 
by a  biholomorphic mapping of $S^2$ 
any other splitting  can be mapped to this one. 
Such a mapping 
induces an automorphism of the 
algebra $\Sa$ (and of
course
also of $\La$). Hence, the obtained central extensions will be isomorphic
(but not equivalent).


\end{document}